\renewcommand{\mid}{|}
\newcommand{\rrvert}{\vert}
\newcommand{\llvert}{\vert}
\def\E{{\mathbb{E}}}
\def\N{{\mathbb{N}}}
\def\P{{\mathbb{P}}}
\def\R{{\mathbb{R}}}
\def\Z{{\mathbb{Z}}}
\newcommand{\F}{{\mathfrak{F}}}
\newtheorem{teo}{Theorem}[section]
\newtheorem{cor}[teo]{Corollary}
\newtheorem{lem}[teo]{Lemma}
\newtheorem{lemma}[teo]{Lemma}
\newtheorem{prop}[teo]{Proposition}
\begin{document}
\begin{frontmatter}

\title{On unbounded invariant measures of stochastic~dynamical systems\thanksref{T1}}
\runtitle{On unbounded invariant measures of SDS}

\begin{aug}
\author[A]{\fnms{Sara} \snm{Brofferio}\ead[label=e1]{sara.brofferio@math.u-psud.fr}\thanksref{T2}}
\and
\author[B]{\fnms{Dariusz} \snm{Buraczewski}\corref{}\ead[label=e2]{dbura@math.uni.wroc.pl}\thanksref{T3}}
\runauthor{S. Brofferio and D. Buraczewski}
\affiliation{Universit\'e Paris-Sud and Uniwersytet Wroclawski}
\address[A]{Laboratoire de Math\'ematiques et IUT de Sceaux\\
Universit\'e Paris-Sud\\
91405 Orsay Cedex\\
France\\
\printead{e1}} 
\address[B]{Instytut Matematyczny\\
Uniwersytet Wroclawski\\
pl. Grunwaldzki 2/4\\
50-384 Wroclaw\\
Poland\\
\printead{e2}}
\end{aug}
\thankstext{T1}{Supported in part by Polonium project 7946/2010.}
\thankstext{T2}{Supported in part by ERC starting Grant GA 257110 ``RaWG.''}
\thankstext{T3}{Supported in part by NCN Grant DEC-2012/05/B/ST1/00692.}

\received{\smonth{4} \syear{2013}}
\revised{\smonth{12} \syear{2013}}

%
\begin{abstract}
We consider stochastic dynamical systems on $\R$, that is, random
processes defined by $X_n^x = \Psi_n(X_{n-1}^x)$, $X_0^x=x$, where
$\Psi
_n$ are i.i.d. random continuous transformations of some unbounded
closed subset of $\R$. We \mbox{assume} here that $\Psi_n$ behaves
asymptotically like $A_n x$, for some random positive number $A_n$ [the
main example is the affine stochastic recursion $\Psi_n(x) = A_n
x+B_n$]. Our aim is to describe invariant Radon measures of the process
$X_n^x$ in the critical case, when $\E\log A_1=0$. We prove that those
measures behave at infinity like $\frac{dx}{x}$. We study also the
problem of uniqueness of the invariant measure. We improve previous
results known for the affine recursions and generalize them to a larger
class of stochastic dynamical systems which include, for instance,
reflected random walks, stochastic dynamical systems on the unit
interval $[0,1]$, additive Markov processes and a variant of the
Galton--Watson process.
\end{abstract}

%
\begin{keyword}[class=AMS]
\kwd[Primary ]{37Hxx}
\kwd{60J05}
\kwd[; secondary ]{60K05}
\kwd{60B15}
\end{keyword}
\begin{keyword}
\kwd{Stochastic recurrence equation}
\kwd{stochastic dynamical system}
\kwd{invariant measure}
\kwd{affine recursion}
\kwd{reflected random walk}
\kwd{Poisson equation}
\end{keyword}

\end{frontmatter}

\section{Introduction}\label{sec1}

\subsection{Stochastic dynamical systems}\label{sec1.1}
Let $\F$ be the semigroup of continuous transformations of an unbounded
closed subset $\mathcal{R}$ of the real line $\R$
endowed with the topology of uniform convergence on compact sets. In
the most interesting examples, $\mathcal{R}$ is the real line, the
half-line $[0,+\infty)$ or the set of natural numbers $\N$. Given a
regular probability
measure $\mu$ on $\F$, we define the stochastic dynamical system (SDS)
on $\mathcal{R}$ by
%
%
\begin{eqnarray}
\label{recursion} X_0^x&=& x;
\nonumber
\\[-8pt]
\\[-8pt]
X_n^x&=& \Psi_n\bigl(X_{n-1}^x
\bigr),\nonumber
\end{eqnarray}
where $\{\Psi_n \}$ is a sequence of i.i.d. random functions,
distributed according to $\mu$.

The aim of this paper is to study conditions for the existence and
uniqueness, as well as behavior at infinity, of an invariant infinite
Radon measure of the process~$X_n^x$, that is, of a measure $\nu$
on $\R$ such that
%
%
\begin{equation}
\label{eqinvariance} \mu*_{\mathfrak F}\nu(f) = \nu(f)
\end{equation}
for any $f\in C_C(\R)$, where
\[
\mu*_{\mathfrak F}\nu(f) = \int_\R\E\bigl[ f
\bigl(X_1^x\bigr) \bigr] \nu(dx) = \int
_\F\int_{\R} f\bigl(\Psi(x)\bigr)
\nu(dx)\mu(d\Psi). %
\]

There is quite an extensive literature on the case when the process
$X_n$ is positive recurrent, that is, it possesses an invariant \textit
{probability} measure. The existence of such a measure can be proved
supposing that the process has some \mbox{contractive} property (e.g.,
if $\Psi_n$ are Lipschitz mappings with Lipschitz coefficients $L_n =
L(\Psi_n)$ and $\E[\log L_1] <0$), \cite{DF}).
This invariant probability measure is well described in several
specific cases, such as affine recursions [i.e., $\Psi(x)=Ax+B$],
namely in the seminal paper of Kesten \cite{Kesten}. Goldie
\cite{Go} and recently Mirek \cite{M} generalized Kesten's theorem
to stochastic recursions such that $\Psi(x)$ behaves like $Ax$ for
large $x$. They proved that if $\E A^\kappa= 1$ (and some other
hypotheses are satisfied), then
\[
\lim_{z\to\infty} z^\kappa\nu\bigl\{x\dvtx |x|>z\bigr\} = C_+ > 0.
\]
In\vspace*{1.5pt} other words, the measure $\nu$ is close at infinity to $\frac
{C_+dx}{x^{1+\kappa}}$.

Less is known for the null recurrent case, especially in a general
setting. Existence and uniqueness of an invariant Radon measure have
been the topic of two recent works: Deroin et al. \cite{DKNP} on
symmetric SDS of homeomorphism of $\R$, and Peign\'e and Woess
\cite{PW1} on the phenomenon of local contraction. We refer to them for
a more complete bibliography on the subject. As in the contracting
case, affine recursions is one of the first models being systematically
approached. A seminal paper in this area is the one of Babillot,
Bougerol and Elie \cite{BBE}. They proved existence and uniqueness of a
Radon measure and gave a first result on its behavior at infinity.

The goal of the present work is twofold. First of all, we investigate
the behavior at infinity of invariant measures, and for a large class
of SDSs, we generalize and improve results known for affine recursions.
Second, we consider the problem of uniqueness of the invariant measure.
We give a relatively simple criterium that can be applied for very
concrete examples.

\subsection{Behavior at infinity}\label{sec1.2}

It turns out that to prove existence and to describe the tail of the
measure it is sufficient to control the maps that generate the SDS near
infinity. In particular, we suppose that they are asymptotically
linear, in the sense that there exists $0\leq\alpha<1$ such that for
all $\psi\in\mathfrak{F}$
{\renewcommand{\theequation}{AL$^\alpha$}
\begin{equation}\label{ALal}
\bigl|\psi(x)-A_{\alpha}(\psi) x\bigr|\leq
B_\alpha(\psi) \bigl(1+|x|^\alpha\bigr)\qquad\mbox{for all }x
\in\mathcal{R}
\end{equation}}\setcounter{equation}{2}%
%
with $A_\alpha(\psi)$ and $B_\alpha(\psi)$ strictly positive. We study
here the critical case, that is, $\E[ \log A_\alpha] = 0$.

Existence of an invariant measure supported in $\mathcal{R}$ is
relatively easy to deduce from the well-known literature, because in
this case the SDS is bounded by a recurrent process (we give more
details in Section~\ref{ssecexistence}). The main result of the paper
is the description of the tail of invariant measures at infinity.

%
\begin{teo}
\label{mthm} Suppose that there exists $0\leq\alpha<1$ such that the
maps $\Psi_n$ satisfy (\ref{ALal}) $\mu$-a.s. and that
%
%
\begin{eqnarray}
\label{H1} && \E[\log A_\alpha] = 0\quad\mbox{and}\quad \P[ A_\alpha=
1]<1,
\\
\label{H2} && \E\bigl[\bigl(|\log A_\alpha| + \log^+|B_\alpha
|\bigr)^{2+\varepsilon
} \bigr]<\infty,
\\
&& \mbox{the law of $\log A_\alpha$ is aperiodic, that is,}
\nonumber\\[-8pt]\label{H3} \\[-8pt]
&&\qquad \mbox{there is no $p\in\R$ such that $\log A_\alpha\in p\Z$ a.s.}\nonumber
\end{eqnarray}
Let $\nu$ be an invariant Radon measure $\nu$ for the process $\{
X_n^x\}
_n$. Suppose that $\nu$ is supported by $\mathcal{R}$ and it is
positive on any neighborhood of $+\infty$. Then the family of dilated
measures $\delta_{z^{-1}}*\nu(I):=\nu(zI)$
converges vaguely on $\R^*_+=(0,\infty)$ to $C_+\frac{da}a$ as $z$
goes to
infinity for some $C_+>0$, that is,
\[
\lim_{z\to\infty} \int_{\R_+^*} \phi
\bigl(z^{-1}u\bigr)\nu(du) = C_+ \int_{\R^*_+} \phi(a)
\frac{da}a
\]
for any $\phi\in C_C(\R^*_+)$.
\end{teo}

The key example of an asymptotic linear SDS is the affine recursion
(called also the random difference equation). Then $\mathfrak{F}$ is
the set
of affine mappings of the real line $\Psi(x)=Ax+B$ with $A> 0$ and the
process is given by the following formula:
%
%
\begin{equation}
\label{eqaffinerecursion} X_n^x = A_n
X_{n-1}^x + B_n, \qquad X_0^x
= x.
\end{equation}
Our results are also valid for Goldie's recursions, for example, $\Psi
(x) =\break  \max\{Ax,B\}+C$ (with $A>0$)
and $\Psi(x) = \sqrt{A^2x^2+Bx+C}$ (with $A,B,C$ positive). Since the
problem can be reduced, without any loss of generality, to the case
$\alpha=0$ (see Lemma~\ref{lemalphatozero}), our hypotheses
essentially coincide, in the one-dimensional situation, with the class
introduced by Mirek \cite{M}. Our main theorem should be viewed as
an analog of Kesten's and Goldie's results in the critical case.

Other interesting examples can be obtained conjugating asymptotic
linear systems with an appropriate homeomorphism. For instance, our
result can also be applied to describe invariant measures of SDS on the
interval generated by functions that have the same derivative at the
two extremities. Theorem~\ref{mthm} also says that invariant measures
of SDS on $[0,+\infty)$ generated by mappings exponentially asymptotic
to translations, that is,
\[
\bigl|\psi(x)-x+u_\psi\bigr|\leq v_\psi{\mathrm{e}}^{-x}
\qquad\forall x\geq0
\]
behave at infinity as the Lebesgue measure $dx$ of $\R$, if $\E
(u_\phi
)=0$. This result can be compared with the Choquet--Deny theorem saying
that the only invariant measure for centered random walks on $\R$ is
the Lebesgue measure. Another interesting process that is $\alpha
$-asymptotically linear for $\alpha>1/2$ is a Galton--Watson evolution
process with random reproduction laws.
In Section~\ref{secexamples}, we give more details on the different examples.

Let us mention that in our previous papers \cite{B,BBD,BBD2} we have
already studied the behavior at infinity of the
invariant measure $\nu$ for the random difference equation (\ref
{eqaffinerecursion}). However,
the main results were obtained there under much stronger assumptions,\vspace*{1pt}
namely we assumed existence of exponential
moments, that is, $\E[A^\delta+ A^{-\delta}+|B|^\delta]<\infty$
for some $\delta>0$.
Theorem~\ref{mthm} improves all our previous results for affine
recursions and describes the asymptotic behavior of $\nu$ under optimal
assumptions, that is, the weakest-known conditions implying existence
of the invariant measure \cite{BBE}. To our knowledge, for all the
other recursions even partial results are not known.

We would like also to remark that, in the contracting case, Kesten's
theorem requires moment of order at least $\kappa$ and, as far as we
know, there exist no results on the behavior of the tail of the
invariant probability when the measure is supposed to have only
logarithmic moment.

The proof of Theorem~\ref{mthm} is given in Sections~\ref{secbounds}
and~\ref{sechomo}. In order to describe $\nu$ at infinity, we give
first an upper bound of this measure and prove some regularity
properties of its quotient. The techniques we use in the present paper
are more powerful than those applied in \cite{BBD}, and are heavily
based on the renewal theory for random walks on the affine group. Among
other results, we prove directly that $\nu[-z,z]$ grows as $\log z$
(Proposition~\ref{propboundnu}).
Next, in Section~\ref{sechomo}, we consider the Poisson equation for
the additive convolution on $\R$
\[
f(x) = \bar\mu* f(x) + g(x), %
\]
where $f(x) = \int\phi(e^{-x}u)\nu(du)$ for some $\phi\in C_C(\R^*_+)$
and $\bar\mu$ is the law of $-\log A$. Notice that the asymptotic
behavior of $f$ and $\nu$ is the same, therefore, it is sufficient to
study $f$.
In the contrast to \cite{BBD},
we do not explicitly solve this equation. We apply techniques borrowed
from the work of Durrett and Liggett \cite{DL}
(see also Kolesko~\cite{K}), reduce the problem to the classical
renewal equation with drift and deduce its asymptotic behavior from the
renewal theorem.

\subsection{Uniqueness of the invariant measure}\label{sec1.3}
Another fundamental question is to determine whether the invariant
measure is unique or not. The nature of this problem is different from
the ones we have considered so far. In fact, uniqueness depends on the
local behavior of the system and it is no more sufficient to control
the random maps only at the infinity.

In the noncontracting case, this problem was studied first by
Babillot, Bougerol and Elie \cite{BBE} in the context of the affine
recursion and they proved uniqueness under the assumptions of Theorem
\ref{mthm}. Relying on their ideas Benda \cite{B} studied in full generality
recurrent and locally contractive SDSs. The SDS is called recurrent if
there exists a closed set $L$
such that every open set intersecting $L$ is visited by $X_n^x$
infinitely often with probability 1.
The SDS is locally contractive if for any $x,y\in\R$ and every compact
set $K\subset\R$,
%
%
\begin{equation}
\label{eqlocalcontraction} \lim_{n\to\infty} \bigl|X_n^x-X_n^y\bigr|
\cdot{\mathbf1}_K\bigl(X_n^x\bigr) = 0 \qquad
\mbox{almost surely}.
\end{equation}
Benda \cite{B} proved that if $\{X_n^x\}$ is a recurrent and locally
contractive SDS, then it possesses a unique (up to a multiplicative constant)
invariant Radon measure. He did not publish his results, however, they
have been recently incorporated, with a complete and simplified proof,
into two papers of Peign\'e and Woess \cite{PW1,PW2}, where they also
investigated ergodicity of SDS generated by Lipschitz maps with
centered Lipschitz's coefficient.

Our aim is to consider very concrete families of Lipschitz mappings of
$\mathbb{R}_+$, as the one presented in Goldie's work \cite{Go}. Although
recurrence of the corresponding SDSs is immediate, the main obstacle in
applying Benda's theorem is the local contraction hypothesis (\ref
{eqlocalcontraction}). In \cite{PW2}, the authors considered the reflected
affine stochastic recursion, being a mixture of the reflected random
walk (described below) and the affine stochastic recursion [defined in
(\ref{eqaffinerecursion})]. Unfortunately, the method of hyperbolic
extensions they introduce cannot be applied to dynamical systems, whose
dependence on the affine recursion cannot be expressed in such a direct way.

A different approach can be found in \cite{DKNP}, where the authors
proved a local contraction property for a symmetric SDS generated by
homeomorphisms of $\R$. Their proof is very elegant but is heavily
based on the additional assumption that the SDS is generated by \emph
{invertible} mappings distributed according to a \emph{symmetric}
measure. In particular, their results cannot be applied to
noninvertible SDS, as the one generated by $\psi(x)=\max\{Ax,B\}+C$,
one of the most interesting in applications.

Our contribution to the subject is to give sufficient conditions for
uniqueness that can be applied to some concrete mappings of $\R
_+=[0,\infty
)$, such as $\psi(x)=\max\{Ax,B\}+C$ and other Goldie's recursions.

%
\begin{teo}
\label{teouniq} Suppose that $\mathcal{R}=[0,\infty)$, $\alpha=0$ and
that the hypotheses of Theorem~\ref{mthm} are satisfied. Assume
moreover that:
\begin{longlist}[(3)]
\item[(1)] there exists $\beta>0$ such that $\P(\Psi[0,+\infty)\subseteq
[\beta,+\infty))>0$;

\item[(2)] $A(\Psi) x\leq\Psi(x)\leq A(\Psi) x+B(\Psi)$ for all $x\geq0$;

\item[(3)] the functions $\Psi$ are Lipschitz and their Lipschitz
coefficients are equal to~$A(\Psi)$.
\end{longlist}
Then the SDS defined on $[0,\infty)$ by (\ref{recursion}) is locally
contractive.
Therefore, there exists a unique invariant Radon measure of the
process $\{X_n^x\}$ on $[0,+\infty)$.
\end{teo}

The proof of this theorem is contained in Section~\ref{secunique}.

\subsection{Reflected random walk}\label{sec1.4}
\label{secreflected}
The reflected random walk is the SDS defined for $x\in\R_+ =
[0,\infty)$, by
%
%
\begin{eqnarray}
\label{eqreflectedrandomwalk} Y_0^x & =& x,
\nonumber
\\[-8pt]
\\[-8pt]
Y_n^x &=& \bigl| Y_{n-1}^x -
u_n \bigr|,\nonumber
\end{eqnarray}
where $u_n$
is a sequence of i.i.d. real valued random variables with a given law
$\mu$.

If $u_n \ge0 $ a.s., then it was proved by Feller \cite{F} that this
process possesses a unique invariant probability
measure $\nu$, that is, a measure satisfying
\[
\mu* \nu(f) = \int_{\R_+} \int_{\R_+}
f\bigl(|x-y|\bigr)\nu(dx)\mu(dy) = \int_{\R
_+} f(x)\nu(dx) = \nu(f).
\]
Moreover, the measure $\nu$ can be explicitly computed: $\nu(dx) =
(1-F(x))\,dx$, for~$F$
being the distribution function of $\mu$. The process has been also
studied in more general settings
when $u_n$ admits also negative values (see Peign\'e, Woess \cite{PW1}
for recent results and a
comprehensive bibliography).

Here, we are interested in the critical case when $\E u_n = 0$. Peign\'
e and Woess \cite{PW1}
proved that if $\E(u_1^+)^{3/2}<\infty$, for\vspace*{1pt} $u_1^+ = \max\{
u_1,0\}$,
then the process
$\{X_n\}$ is recurrent on $\R_+$. As a consequence of Benda's theorem,
the process possesses
a unique invariant Radon measure $\nu$ on $\R_+$ (local contractivity
is easy to prove).
The reflected random walk can be transformed in an asymptotically
linear system by conjugating with an invertible function $s$ of
$[0,+\infty)$ such that $s(x)={\mathrm{e}}^x$ for large $x$. Then
$\psi(x)=s(|s^{-1}(x)-u|)$ is asymptotically linear with $A(\psi
)=e^{-u}$. Hence, Theorem~\ref{mthm} can be used to justify that the
invariant measure of $Y_n^x$
behaves at infinity like the Lebesgue measure. Nevertheless, in this
case, one can prove the same result under weaker moment assumptions and
a much simpler proof. A~short argument based
only on the duality lemma and the renewal theorem gives the following.

%
\begin{teo}
\label{mthm2}
Assume $\E u_1 = 0$, $\E(u_1^+)^{3/2}<\infty$, $\E
(u_1^-)^2<\infty$ and
the law $\mu$ of $u_1$
is aperiodic, then for every $\phi\in C_C(\R_+)$
\[
\lim_{x\to\infty} \int_{\R_+} \phi(u-x)\nu(du) =
C_+\int_{\R_+} \phi(u)\,du
\]
for some positive constant $C_+$.
\end{teo}

The proof of this theorem will be given in Section~\ref{secreflectedproof}.

We are grateful to the referees for their careful reading
of the manuscript and many helpful suggestions for improvement in the
presentation.

\section{Notation and preliminary results}\label{sec2}

\subsection{Reduction to condition (\texorpdfstring{\protect\ref{AL}}{AL})}\label{sec2.1}
Observe first that, conjugating the SDS with an appropriate function,
we can suppose without loss of generality that the distance of the
random map to a linear function is smaller than some constant. In fact,
we have the following lemma whose proof is postponed to \hyperref[appendix]{Appendix}.

%
\begin{lemma}\label{lemalphatozero}
Let $0\leq\alpha<1$. Suppose that $\psi$ satisfies
{\renewcommand{\theequation}{AL$^\alpha$}
\begin{equation}\label{ALal2}
\bigl|\psi(x)-A_{\alpha}x\bigr|\leq
B_\alpha\bigl(1+|x|^\alpha\bigr).
\end{equation}}%
Then the conjugate function $\psi_r=r\circ\psi\circ r^{-1}$, where
$r(x)=\operatorname{sign}(x)|x|^{1-\alpha}$, satisfies (\textup{AL}$^0)$ with
$A_0=A_\alpha^{1-\alpha}$. The appropriate constant $B_0$ can be chosen
such that $\log^+ B_0\leq C_\alpha(|\log A_\alpha|+\log^+B_\alpha+
1)$, for the constant $C_\alpha$ depending only on $\alpha$.
\end{lemma}

If $\psi$ is distributed according to $\mu$, the law $\psi_r$ is given
by $\mu_r=\delta_r* \mu*\delta_{r^{-1}}$, and if $\nu$ is a $\mu
$-invariant
measure then $\nu_r=\delta_r*\nu$ is $\mu_r$-invariant. Thus, if Theorem
\ref{mthm} holds for $\nu_r$, then it holds for $\nu$. Indeed
\begin{eqnarray*}
\lim_{z\to\infty} \int_{\R_+^*} \phi
\bigl(z^{-1}u\bigr)\nu(du) &=& \lim_{z\to
\infty} \int
_{\R_+^*} \phi\bigl(z^{-1}r^{-1}(u)\bigr)
\nu_r(du)
\\
&=& \lim_{z\to\infty} \int_{\R_+^*}
\phi\bigl(z^{-1}u^{1/(1-\alpha)}\bigr)\nu_r(du)
\\
&=& \lim_{z\to\infty} \int_{\R_+^*} \phi\bigl(
\bigl(z^{-(1-\alpha
)}u\bigr)^{1/(1-\alpha
)}\bigr)\nu_r(du)
\\
& =& C_+
\int_{\R^*_+} \phi\bigl(a^{1/(1-\alpha)}\bigr)\frac
{da}a
\\
&=& C_+ (1-\alpha) \int_{\R^*_+} \phi(a)\frac{da}a.
\end{eqnarray*}
In order to simplify our notation, we will suppose from now on that
$\alpha=0$, that is, for all $\psi\in\mathfrak{F}$
{\renewcommand{\theequation}{AL}
\begin{equation}\label{AL}
A(\psi)x-B(\psi)<\psi(x)<A(\psi)x+B(\psi)\qquad\mbox
{for all }x\in\mathcal{R}.
\end{equation}}\setcounter{equation}{0}%
Since $\mathcal{R}$ is closed, we can extend the property (\ref{AL}) to all
$x\in\R$ for a suitable continuous extension of $\psi$ to $\R$.
With a
slight abuse of notation, we will denote with the same letter (e.g.,
$\psi$), the map from $\mathcal{R}$ to $\mathcal{R}$ and its continuous
extension that verifies (\ref{AL}) for all $x\in\R$. In the same way, $\nu$
will be seen both as a measure on $\mathcal{R}$ and as a measure on
$\R
$ whose support is contained in $\mathcal{R}$.

\subsection{Comparison of $X_n^x$ with the affine recursion}\label{sec2.2}
We assume that the maps $A=A(\psi)$ and $B=B(\psi)$ from $\mathfrak{F}$
to $\R_+^*=(0,\infty)$ are measurable and that $\mathfrak{F}$ is a monoid
closed by composition. Assumption (\ref{AL}) implies
\[
\mathop{\lim_{x\to+\infty}}_{ x\in\mathcal{R}} \psi(x)/x=\mathop{\lim
_{x\to-\infty}}_{x\in\mathcal{R}}
\psi(x)/x=A(\psi),
\]
therefore, the map $A$ is a homomorphism from $\mathfrak{F}$ to $\R
_+^*$, that is, $A(\psi_1\circ\psi_2)=A(\psi_1)A(\psi_2)$. The choice of
$B$ is not unique and it can be chosen as big as needed.

Let $\{\Psi_n\}_{n=1}^\infty$ be an i.i.d. sequence of random variables
with values in $\mathfrak{F}$ of law~$\mu$. We are interested in the
study of the iterated stochastic function system
\[
X_n^x=\Psi_n\bigl(X_{n-1}^x
\bigr) = \Psi_n\cdots \Psi_1(x)\quad\mbox{and}\quad
X_0^x=x.
\]
%
If hypothesis (\ref{AL}) is satisfied, the trajectories of the process
$X_n^x$ can be dominated from below and from above by the affine recursions
%
%
\begin{equation}
\label{affrec} Z_n^x = A_n
Z_{n-1}^x - B_n\quad\mbox{and}\quad
Y_n^x = A_n Y_{n-1}^x +
B_n,
\end{equation}
where, to simplify our notation, we note $A_n=A(\Psi_n)$ and
$B_n=B(\Psi
_n)$. We will also assume, according to hypotheses of Theorem~\ref{mthm}, a logarithmic moment of order $2+\varepsilon$ and that $\log
A_1$ is nontrivially centered. Without any loss of generality, we can
also choose $B(\psi)$, such that
%
%
\begin{eqnarray}
\label{H4} &B_n \geq1\qquad\mbox{a.s.},&
\\
\label{H5} & \P(A_nx+B_n=x)<1\qquad\mbox{for all }x.&
\end{eqnarray}
In such a way, the two-dimensional process $(Z_n^x,Y_n^x)$ satisfies
all the assumptions required by Babillot, Bougerol and Elie \cite{BBE}.
Thus, it is recurrent, locally contractive and possesses a unique
invariant measure.

It will be convenient to use in the proof the language of groups.
Namely, let $G=\operatorname{Aff}(\R)=\R\rtimes\R^*_+$ be the group of all
affine mappings of $\R$, that is, the set of pairs $(b,a)\in\R\times
\R
^*_+$ acting on $\R\dvtx (b,a)\dvtx x \mapsto ax+b$. Then the group product
is given by the formula
\[
(b,a) \cdot\bigl(b',a'\bigr) = \bigl(b+ab',aa'
\bigr), %
\]
the identity element is $(0,1)$
and the inverse element is given by
\[
(b,a)^{-1} = (-b/a,1/a).
\]
Let $\mu_G$ be the probability distribution of $(B_n,A_n)$ on the
group $G$.
Then the random elements $g_n=(B_n,A_n)$ are i.i.d. random variables in
$G$ with law $\mu_G$.
We define the left and the right random walk on $G$:
%
%
\begin{equation}
\label{leftright} L_n = g_n\cdot \cdots \cdot g_1,
\qquad R_n = g_1\cdot \cdots \cdot g_n.
\end{equation}
Then $Y_n^x = L_n (x) $.

A very important role in our proofs will be played by the random walk
on $\R$ generated by $-\log A_i$, that is,
%
%
\begin{equation}
\label{sn} S_n = -(\log A_1+\cdots+ \log
A_n)
\end{equation}
(we put the sign minus to follow notations of our previous works).
Since \mbox{$\E\log A= 0$}, the random walk $S_n$ is recurrent. Moreover,
since we assume aperiodicity, the support of $S_n$ is just $\R$. We
often use the downward and upward sequence of stopping times
%
%
\begin{equation}
\label{stopping} \qquad l_n:=\inf\{k>l_{n-1}\dvtx S_k<S_{l_{n-1}}
\},\qquad t_n:=\inf\{ k>t_{n-1}\dvtx S_k\geq
S_{t_{n-1}}\}
\end{equation}
and $l_0=t_0=0$. Observe that $t_1$ and $l_1$ are almost surely finite,
but have infinite mean. On the other hand, hypothesis $\E(|\log
A|^{2+\varepsilon})<\infty$ guarantees that $S_{t_1}$ and $S_{l_1}$
are integrable (see \cite{CL}).

In the sequel, we will use, depending on the situation, different
convolutions. We define a convolution of a function $f$ on $\R$ with a
measure $\eta$ on $\R$ as a measure on $\R$ given by
%
%
\begin{equation}
\label{conv1} f*\eta(K) = \int_\R{\mathbf1}_K
\bigl(f(u)\bigr)\eta(du)= \eta\bigl(f^{-1}(K)\bigr).
\end{equation}
Given $z\in\R^*_+$ and a measure $\eta$ on $\R$, we define
%
%
\begin{equation}
\label{conv2} \delta_{z}*_{\R^*_+}\eta(K) = \int
_\R{\mathbf1}_K(zu) \eta(du)= \eta
\bigl(z^{-1}K\bigr).
\end{equation}

\subsection{Existence of an invariant measure}\label{sec2.3}\label{ssecexistence}

We conclude this section observing that the existence of the invariant
measure on $\mathcal{R}\subseteq\R$ for a SDS satisfying the
hypotheses of Theorem~\ref{mthm} follows immediately from recurrence of
the process $\{X_n^x\}$ and Lin's theorem \cite{L}.

More precisely, consider the positive operator $Pf(x)=\int f(\Psi
(x))\mu
(d\Psi)$ on $C_b(\mathcal{R})$. Then, since $Z^x_n\le X_n^x\le Y_n^x$
and $(Z^x_n, Y_n^x)$ is recurrent, the process $\{X_n^x\}$ is
recurrent, that is, there exists a nonnegative function $u\in C_c(\R)$
such that $\sum_{n=0}^\infty P^nu(x)=\infty$ for all $x$. Therefore, by
\cite{L}, there exists a nonnull invariant Radon measure $\nu$ on
$\mathcal{R}$ of the process $\{X_n^x\}$.

Observe that the support of this measure can be bounded (e.g.,
if the functions $\Psi$ fix the point $0$, then the Dirac measure at
$0$ is an invariant measure).
In this paper, we are interested in measures having unbounded support.
A sufficient
(but not necessary) condition to ensure that the invariant measure is
not bounded is to
assume that the random functions $\Psi$ do not fix a compact subset $C$
of $\R$ [i.e., there is no compact $C$ such that $\P(\Psi(C)\subseteq C)=1$].

\section{First bounds of the tail of the invariant measure}\label{sec3}
\label{secbounds}

We start to study the behavior of $\nu$ at infinity. In particular, we
will prove in this section that $\nu(dx)$ does not grow faster than
$\frac{dx}{x}$, the Haar measure of $\R_+^*$. The behavior of $\nu$ at
$\infty$
is related to the behavior of the family of measures $\delta
_{z^{-1}}*\nu$.
In this section, we prove the following.

%
\begin{prop}\label{propboundnu} Under the hypotheses of Theorem
\ref
{mthm}, we have the following:
\begin{longlist}[(3)]
\item[(1)] There exists $C_0>0$ such that
\[
\nu[-z,z]< C_0 ( 1+\log z)\qquad\mbox{for all } z>1.
\]
Moreover, if the support of $\nu$ is not bounded on the right, that is,
$\nu(z,+\infty)>0$ for all $z\in\R$, then:

\item[(2)] There exist $M>1$ and $\delta>0$ such that $\nu[z,zM]>\delta$
for all
$z\geq1$.

\item[(3)] For all $u_2>u_1>0$, there exists $C= C(u_1,u_2,M)>0$ such that
%
%
\begin{equation}
\label{eqcompactuniflogbound} \frac{\nu[{\mathrm{e}}^{x+y}u_1,{\mathrm
{e}}^{x+y}u_2]}{\nu
[{\mathrm{e}}^x,{\mathrm{e}}^x M]}< C ( 1+y)\qquad\mbox{for all } x>0, y>0.
\end{equation}
In particular, the family of measures
$\frac{1}{\nu[z,zM]}\delta_{z^{-1}}*\nu$ on $(0,+\infty)$ is vaguely
compact when $z$ goes to $+\infty$.
\end{longlist}
\end{prop}

There are two key arguments in the proof of this proposition. One is
the following lemma that we will use several times in the sequel.

%
\begin{lemma}\label{lemfond-bound-nu}
Let $\nu$ be a positive $\mu$-invariant measure on $\R$. Then for any
pair of intervals $V,U\subset\R$,
\[
\nu(V)\geq\P(T_\mathfrak{W}<\infty)\cdot\nu(U),
\]
where
\[
\mathfrak{W}=\mathfrak{W}(V,U)= \bigl\{\psi\in\F\mid\psi(U)\subset V
\bigr\}
\]
and $T_\mathfrak{W}$ is the stopping time defined by $T_\mathfrak
{W}=\inf\{n\geq0\dvtx \Psi_1\cdots\Psi_n \in\mathfrak{W}\}$.
\end{lemma}

\begin{pf}
Observe that the backward process
\[
M_n=\Psi_1\cdots\Psi_n*\nu(V)\qquad
M_0=\nu(V)
\]
is a positive martingale with respect to the filtration generated by
the $\Psi_n$. In fact,
\[
\E(M_n|\mathcal{F}_{n-1})=\Psi_1\cdots
\Psi_{n-1}*\mu*\nu(V) =\Psi_1\cdots\Psi_{n-1}*
\nu(V).
\]
Since $ (\Psi_1\cdots\Psi_{T_\mathfrak{W}})^{-1}(V)\supseteq U$,
for any fixed $n\in\N$, by the optional stopping time theorem,
\[
\nu(V)=\E(M_{T_\mathfrak{W}\wedge n})\geq\E\bigl({\mathbf1}_{\{
T_\mathfrak
{W}\leq n\}}\Psi_1
\cdots\Psi_{T_\mathfrak{W}}*\nu(V)\bigr) \geq\P(T_\mathfrak{W} <n) \nu(U).
\]
We let $n$ go to infinity to conclude.
\end{pf}

The other crucial observation
is that the backward recursion $\Psi_1\cdots\Psi_n(x)$ is controlled
by the right random walk $R_n$ on the affine group generated by the
product of $g_i=(B_i,A_i)$ [see (\ref{leftright})]. More precisely,
given $g\in\operatorname{Aff}(\R)$, we denote by $a(g)$ and $b(g)$ its
projections on $\R^*_+$ and $\R$, respectively, then
\[
a(R_n)x-b(R_n)\leq\Psi_1\cdots
\Psi_n(x)\leq a(R_n)x+b(R_n).
\]
We use these bounds to estimate the stopping time that appears in Lemma
\ref{lemfond-bound-nu}. In particular, as an immediate consequence of
the lemma above, we obtain the following.

%
\begin{cor}
\label{corst}
Let
\[
W=W(m_1,m_2,k_1,k_2)= \bigl
\{(B,A)\in\operatorname{Aff}(\R) \mid A k_2+B\leq m_2;  A
k_1-B \geq m_1 \bigr\}
\]
(see Figure~\ref{fig1}) and $T_W=\inf\{n\geq0\dvtx R_n\in W\}$. Then we have
\[
\nu(m_1,m_2) \ge\P[T_W <\infty]
\nu(k_1,k_2). %
\]
\end{cor}

%

\begin{pf}
The corollary follows from Lemma~\ref{lemfond-bound-nu}, taking
$U=[k_1,k_2]$, $V=[m_1,m_2]$ and noticing that $T_W\geq T_\mathfrak{W}$.
\end{pf}

Since the potential theory of the affine group is well understood, we
have enough tools to estimate $\P(T_W<+\infty)$ in many situations. For
a continuous and compactly supported function $f$ on $\operatorname
{Aff}(\R
)$, we define the potential
\[
U*\delta_g(f):=\E\Biggl[\sum_{n=0}^\infty
f(L_ng) \Biggr] = \E\Biggl[\sum_{n=0}^\infty
f(R_ng) \Biggr].
\]
\begin{figure}[t]

\includegraphics{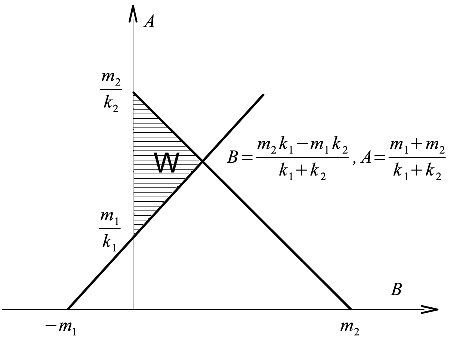}

\caption{The set $W=W(m_1,m_2,k_1,k_2)$.}\label{fig1}
\end{figure}

\noindent 
A renewal theorem for the potential $U$, that is, description of its
behavior at infinity, was given in \cite{BBE}, where the authors proved
that for all $h\in C_C(\operatorname{Aff}(\R))$:
%
%
\begin{equation}
\label{renewal} \lim_{a\to0} U*\delta_{(0,a)}(h) =
\nu_{G}\otimes\frac{dx}{x} (h)
\end{equation}
for $\nu_G$ being a suitable nontrivial multiple of the invariant
measure of the process $Y_n^x=L_n(x)$.

Now we are ready to prove the following lemma.

%
\begin{lemma}\label{lem-bound-stop-time} Suppose (\ref{H1}), (\ref
{H2}), (\ref{H4}) and (\ref{H5}). There exist a compact subset
\[
V_0=\bigl\{(B,A)\in\operatorname{Aff}(\R)\mid|B|<b_0,
a_0^{-1}<A<a_0 \bigr\}
\]
and a constant $\delta>0$ such that:
\begin{longlist}[(2)]
\item[(1)] if $W_z=(0,z)\cdot V_0= \{(B,A)\mid|B|<z b_0, z
a_0^{-1}<A<z a_0 \}$, then
\[
\P(T_{W_z}<\infty)> \delta
\]
for all $z\geq1$;
\item[(2)] if $V_z=V_0\cdot(0,z^{-1})= \{(B,A)\mid|B|< b_0, a_0^{-1}/z<A<
a_0/z \}$, then
\[
\P(T_{V_z}<\infty)> \frac{\delta}{1+\log z}
\]
for all $z\ge1$.
\end{longlist}
\end{lemma}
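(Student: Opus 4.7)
My plan is to prove both parts by the same two-step mechanism: lower-bound the expected number of visits of $(R_n)$ to the target set via the renewal theorem~\eqref{renewal}, then convert this occupation bound into a hitting-probability bound through a strong Markov decomposition that controls a ``recentered'' Green function after the first visit. I would start by choosing $V_0=\{(B,A):|B|<b_0,\,a_0^{-1}<A<a_0\}$ with $b_0,a_0>1$ large enough that the renewal limit $\nu_G\otimes\frac{dx}{x}({\bf 1}_{V_0})=2(\log a_0)\,\nu_G([-b_0,b_0])$ is strictly positive, which is possible since $\nu_G$ is a nontrivial Radon measure on $\R$.

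For part~(1), the inclusion $V_0\cdot(0,z)\subseteq W_z$ (both sets share the $A$-range $(z/a_0,za_0)$, and $|B|<b_0\le zb_0$ for $z\ge 1$), together with the identity $R_n(0,z^{-1})\in V_0\iff R_n\in V_0\cdot(0,z)$, gives
\[\E\Big[\sum_{n}{\bf 1}_{W_z}(R_n)\Big]\;\ge\;U*\delta_{(0,z^{-1})}({\bf 1}_{V_0}),\]
and the right-hand side converges to $2c:=\nu_G\otimes\frac{dx}{x}({\bf 1}_{V_0})>0$ by \eqref{renewal}. To pass to probability I apply the strong Markov property at $T_{W_z}$: subsequent visits of $R_n$ to $W_z$ coincide in law with visits of an independent copy $R_k'$ to $g^{-1}W_z$, where $g=R_{T_{W_z}}\in W_z$. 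A short calculation with $g=(\beta,\alpha)$, $|\beta|<zb_0$, $\alpha\in(z/a_0,za_0)$, yields $g^{-1}W_z\subseteq V^*:=\{|B|\le 2a_0b_0,\;a_0^{-2}<A<a_0^{2}\}$, a compact set \emph{independent} of $z$. In the critical case the walk $(R_n)$ is transient on $\mathrm{Aff}(\R)$---recurrence of $S_n$ combined with (H4) forces $b(R_n)\to+\infty$ almost surely---so the Green function $G^*:=\E[\sum_k{\bf 1}_{V^*}(R_k')]$ is finite and $\P(T_{W_z}<\infty)\ge c/(1+G^*)$ for $z\ge z_0$. The remaining range $z\in[1,z_0]$ is absorbed by a compactness argument using recurrence and aperiodicity of $S_n$.

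Part~(2) follows the same scheme: the identity $\E[\sum_n{\bf 1}_{V_z}(R_n)]=U*\delta_{(0,z)}({\bf 1}_{V_0})$ holds, and the companion asymptotic as $a\to+\infty$---obtained by the same method of \cite{BBE} applied to the reversed walk $R_n^{-1}$, whose generating law again satisfies (H1)--(H5)---gives a uniform positive lower bound $c'>0$ on the expected number of visits. The crucial difference is that the recentered set
\[g^{-1}V_z\;\subseteq\;V_z^{**}:=\{(B,A):|B|\le 2 a_0 b_0\,z,\;a_0^{-2}<A<a_0^{2}\}\]
now grows linearly in $z$ in the $B$-direction. The main obstacle of the proof is therefore the Green-function estimate $G_z^{**}:=\E[\sum_k{\bf 1}_{V_z^{**}}(R_k')]=O(\log z)$. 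I would establish it by combining (i)~the bound $b(R_k)\ge e^{-\min_{j<k}S_j}$ coming from $B_j\ge 1$ and positivity of the summands of $b(R_k)$, which forces $\min_{j<k}S_j\ge -\log(2a_0b_0 z)\sim-\log z$ whenever $R_k\in V_z^{**}$, with (ii)~the classical fact that for a centered random walk with finite variance (guaranteed by (H2)) the expected number of visits to a bounded interval before hitting level $-N$ is $O(N)$---namely the Green function of the walk killed at $-N$. Taking $N=\log(2a_0 b_0 z)$ yields $G_z^{**}=O(\log z)$, and combined with $\E[\sum_n{\bf 1}_{V_z}(R_n)]\ge c'$ this produces
\[\P(T_{V_z}<\infty)\;\ge\;\frac{c'}{1+G_z^{**}}\;\ge\;\frac{\delta}{1+\log z}\]
for $z$ large; the range $z\in[1,z_0]$ is handled as in part~(1).
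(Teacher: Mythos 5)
Your proof is correct, and its skeleton coincides with the paper's: both rest on the inequality $\P(T_V<\infty)\ge U(V)/U(V^{-1}V)$ (your strong--Markov recentering at $T_{W_z}$, with $g^{-1}W_z\subseteq V^*$ and $g^{-1}V_z\subseteq V_z^{**}$, is exactly the paper's Step 1 with $V^{-1}V$); both get the numerators from the Babillot--Bougerol--\'Elie renewal theory (you lower-bound $U(W_z)$ by $U(V_0(0,z))=U*\delta_{(0,z^{-1})}(\mathbf 1_{V_0})$, which is a slightly slicker route than the paper's passage to the reversed walk, and for part (2) both of you appeal to the limit of the potential at the other end of the group, which the displayed statement \eqref{renewal} does not literally cover in either write-up); and both reduce part (2) to showing $U(\{|B|\le 2a_0b_0z,\ a_0^{-2}<A<a_0^{2}\})=O(\log z)$. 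Where you genuinely diverge is in proving that last bound: the paper factorizes the affine potential through the double ladder-epoch identity \eqref{potential}, uses the duality lemma, and applies the one-dimensional renewal theorem twice, whereas you use $b(R_k)\ge e^{-\min_{j<k}S_j}$ (valid since $B_j\ge1$ and all summands of $b(R_k)$ are positive) to force $\min_{j<k}S_j\ge-\log(2a_0b_0z)$ on the event $R_k\in V_z^{**}$, and then quote the $O(N)$ bound for the Green function of the centered, finite-variance walk $S_n$ killed below $-N$. Your version is more transparent and isolates the probabilistic content, at the price of importing a classical fact whose standard proof is essentially the ladder decomposition the paper writes out explicitly. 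Two points to tighten: (i) almost-sure transience of $R_n$ (i.e.\ $b(R_n)\to\infty$) gives only that the number of visits to $V^*$ is a.s.\ finite, not that its expectation $G^*=U(V^*)$ is finite; you need local finiteness of the potential $U$, which is part of the renewal theory of \cite{BBE} that you are already invoking (the paper uses it tacitly when it divides by $U(V_0^{-1}V_0)$); (ii) \eqref{renewal} is stated for $h\in C_C$, so applying it to $\mathbf 1_{V_0}$ requires the usual sandwiching by continuous functions, harmless because the limit measure does not charge $\partial V_0$ for generic $a_0,b_0$.
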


\begin{pf}
\textit{Step} 1. First observe that for every $V\subset\operatorname{Aff}(\R)$
%
%
\begin{equation}
\label{eqU-T} U\bigl(V^{-1}V\bigr) \P(T_V<\infty)\geq
U(V).
\end{equation}
In fact,
\begin{eqnarray*}
U(V)&=&\sum_{n=0}^\infty\P[R_n\in
V]=\E\Biggl[{\mathbf1}_{\{T_V<\infty\}
}\sum_{n=T_V}^\infty{\mathbf1}_{\{R_{T_V}R_n^{T_V}\in V\}} \Biggr]
\\
&\leq&\P(T_V<\infty) U
\bigl(V^{-1} V\bigr),
\end{eqnarray*}
where $R_n^l:=R_l^{-1}R_n=g_{l+1}\cdots g_n$.

\textit{Step} 2:
\textit{Proof of} (1).
By (\ref{eqU-T}), we write (assuming the
denominator is nonzero)
%
%
\begin{equation}
\P(T_{W_z}<\infty) \ge\frac{U(W_z)}{U(W_z^{-1}W_z)}=\frac
{U((0,z)\cdot
V_0)}{U(V_0^{-1}V_0)}.
\end{equation}
A simple calculation relates the right random walk on the affine group
to the reversed left random walk $\breve L_n= R_n^{-1} = g_n^{-1}\cdots
g_1^{-1}$.
Observe that for any $V\subset \operatorname{Aff}(\R)$ we have
\begin{eqnarray*}
U\bigl((0, z ) V\bigr) &=& \sum_n \P
\bigl[R_n\in(0, z ) V \bigr]=\sum_n\P
\bigl[R_n^{-1}\in V^{-1}\bigl(0, z^{-1}
\bigr) \bigr]
\\
&=& \sum_n \P\bigl[\breve L_n(0, z )
\in V^{-1} \bigr]=\breve U \bigl(V^{-1}\bigl(0,
z^{-1} \bigr) \bigr),
\end{eqnarray*}
where $\breve U$ is the potential of the reversed random walk $\breve
L_n$. Since the law of $g_n^{-1}$ is also centered and verifies the
hypotheses of \cite{BBE}, there exists a unique Radon measure $\breve
\nu_G$ on $\R$ invariant under $\breve\mu_G$, the law of $g^{-1} =
(B,A)^{-1}$.
Then by (\ref{renewal})
\[
\lim_{z\to+\infty}U\bigl((0, z ) V\bigr)= \lim_{z\to+\infty}
\breve U \bigl(V^{-1}\bigl(0, z^{-1} \bigr) \bigr) = \biggl(
\breve\nu_G\times\frac{dx}x \biggr) \bigl(V^{-1}
\bigr).
\]
We take sufficiently large $V_0$ such that
\[
U\bigl(W_z^{-1}W_z\bigr)=U
\bigl(V_0^{-1}V_0\bigr)>0\quad\mbox{and}\quad
\biggl( \breve\nu_G\times\frac{dx}x \bigl(V_0^{-1}
\bigr) \biggr)>0
\]
and, in view of (\ref{eqU-T}), we conclude.

\textit{Step} 3: \textit{Proof of} (2).
As in the previous step, by (\ref{eqU-T}),
we write
%
%
\begin{equation}
\label{eqmod34} \P(T_{V_z}<\infty) > \frac{U(V_z)}{U(V_z^{-1}V_z)}=
\frac{U
(V_0(0,z^{-1}) )}{U ((0,z)V_0^{-1}V_0(0,z^{-1}) )}.
\end{equation}
Now we have to estimate $U(V_z)$ from below 
and
$U(V_z^{-1}V_z)$ from above.
The latter is the most difficult part of the proof.

To deal with this second problem, we decompose the centered random walk
on the affine group in a contracting part and a dilating part using
ladder stopping times. This key idea has been applied in several
different ways in important works on the subject, for instance, \cite
{Grin,E,LPP,BBE}. We use here a potential theoretic version.
Let $\{\bar{g}_i\}$ be another sequence of i.i.d. elements of
$\operatorname{Aff}(\R)$ independent and of the same law as $\{g_i\}$.
We define
$\overline S_n, \bar t_k, \bar l_k$ as in (\ref{sn}) and (\ref{stopping}).
We claim that
%
%
\begin{equation}
\label{potential} U(f)=\E\Biggl[\sum_{n=0}^\infty
f(L_n) \Biggr]=\E\Biggl[\sum_{k,i=0}^\infty
f(\overline{R}_{\bar{l}_i}L_{t_k}) \Biggr].
\end{equation}
In fact, for $n> k$ define $L^k_n=g_n\cdots g_{k+1}$ and $L^k_k=e$.
Observe that
\[
\E\Biggl[\sum_{n=0}^\infty
f(L_n) \Biggr]=\E\Biggl[\sum_{k=0}^\infty
\sum_{i=t_k}^{t_{k+1}-1} f(L_i)
\Biggr]=\E\Biggl[\sum_{k=0}^\infty\E\Biggl[
\sum_{i=t_k}^{t_{k+1}-1} f\bigl(L^{t_k}_iL_{t_k}
\bigr) \big|L_{t_k} \Biggr] \Biggr].
\]
Since for fixed $k$, the sequence $\{L^{t_k}_{t_k+i}\}_{i\geq0}$ is
independent of $L_{t_k}$ and has the same law as $\{L_i\}_{i\geq0}$,
by the duality lemma (see Lemma 5.4 \cite{BBD}) we have
\[
\E\Biggl[\sum_{i=t_k}^{t_{k+1}-1} f
\bigl(L^{t_k}_iL_{t_k}\bigr)\big|L_{t_k}=g
\Biggr]=\E\Biggl[\sum_{i=0}^{t_{1}-1}
f(L_ig) \Biggr]=\E\Biggl[\sum_{i=0}^\infty
f(\overline{R}_{\bar{l}_i}g) \Biggr]
\]
and we obtain (\ref{potential}).

Observe that $\overline S_{\bar{l}_i}$ (resp., $S_{t_k}$) is a
random walk
with finite mean and negative (resp., positive) steps. Take $a,b>2$,
then by (\ref{potential}) and the classical renewal theorem \cite{F},
we have
\begin{eqnarray*}
&& U\bigl([-b,b]\times[1/a,a]\bigr)
\\
&&\qquad = \sum
_{k,i=0}^\infty\P\bigl[ b(\overline{R}_{\bar{l}_i}L_{t_k})
\le b; -\log a\le\overline{S}_{\bar{l}_i}+S_{t_k}\le\log a \bigr]
\\
&&\qquad =\sum_{k,i=0}^\infty\P\bigl[{
\mathrm{e}}^{-\overline S_{\bar{l}_i}}b(L_{t_k})+b(\overline{R}_{\bar{l}_i})
\leq b; -\log a\leq\overline{S}_{\bar{l}_i}+S_{t_k}\leq\log a
\bigr]
\\
&&\qquad \leq \sum_{k,i=0}^\infty\P\bigl[b(
\overline{R}_{\bar{l}_i})\leq b; -\log a\leq\overline{S}_{\bar{l}_i}+S_{t_k}
\leq\log a \bigr] \qquad\mbox{since $b(L_{t_k})\ge0$}
\\
&&\qquad = \sum_{i=0}^\infty\E\Biggl[{
\mathbf1}_{[b(\overline{R}_{\bar{l}_i})\leq
b] } \E\Biggl[\sum_{k=0}^\infty{
\mathbf1}_{\{-\log a\leq\overline
{S}_{\bar{l}_i}+S_{t_k}\leq\log a \}} |\bar{g}_i, i\geq0 \Biggr] \Biggr]
\\
&&\qquad \le C \log a \sum_{i=0}^\infty\P\bigl[b(
\overline{R}_{\bar{l}_i})\leq b \bigr].
\end{eqnarray*}
Since we assume $B\geq1$ a.s., we have for $i\geq1$:
\[
b(\overline{R}_{\bar{l}_i})=b\bigl(\overline{R}_{\bar{l}_{i-1}}
\overline{R}{}^{\bar{l}_{i-1}}_{\bar{l}_{i}}\bigr)={\mathrm
{e}}^{-\overline S_{\bar{l}_{i-1}}}b
\bigl(\overline{R}{}^{\bar{l}_{i-1}}_{\bar{l}_{i}}\bigr)+b(\overline
{R}_{\bar{l}_{i-1}})
\geq{\mathrm{e}}^{-\overline S_{\bar{l}_{i-1}}}.
\]
That is,
\begin{eqnarray*}
U\bigl([-b,b]\times[1/a,a]\bigr)&\leq& C \log a \Biggl(1+\sum
_{i=1}^\infty\P[ \overline S_{\bar{l}_{i-1}} \geq- \log b ] \Biggr)
\\
&\leq& C \log a (1+C\log b).
\end{eqnarray*}
Therefore, since
\[
V_z^{-1}V_z\subseteq\bigl\{ (B,A)\mid|B|
\leq2b_0a_0 z, a_0^{-2}\leq A\leq
a_0^2\bigr\},
\]
we obtain
\[
U\bigl(V_z^{-1}V_z\bigr)\leq K \log
a_0\bigl(1 + \log z+\log(2b_0a_0) \bigr).
\]
To estimate $U(V_z)$ from below as in the previous case, we just apply
the renewal theorem (\ref{renewal}). Plugging those estimates into
(\ref{eqmod34}), we conclude.
\end{pf}

\begin{pf*}{Proof of Proposition~\ref{propboundnu}}
\textit{Step} 1: \textit{Proof of} (1).
We apply Corollary~\ref{corst} with
$[k_1,k_2]=[-z,z]$ and $[m_1,m_2]=[-2b_0,2b_0]$ and consider,
according to the notation there, the subset of $\operatorname{Aff}(\R)$
\begin{eqnarray*}
W(-2b_0,2b_0,-z,z) &=& \bigl\{g\in\operatorname{Aff}(\R)\mid g
\bigl([-z,z]\bigr)\subseteq[-2b_0,2b_0] \bigr\}
\\
&=& \bigl
\{(B,A)\mid Az+B<2b_0 \bigr\}.
\end{eqnarray*}
This subset contains the set
\[
V_z= \biggl\{(B,A) \bigg| \frac{b_0^{-1}}{z}<A<\frac{b_0}{z}, |B| <
b_0 \biggr\}.
\]
We can apply Corollary~\ref{corst} and, choosing $b_0$ large enough,
Lemma~\ref{lem-bound-stop-time}(2) to conclude:
\[
\nu(-z,z)\leq\frac{\nu[-2b_0,2b_0]}{\P(T_{V_z}<\infty)}< C_0(1+\log z).
\]

\textit{Step} 2: \textit{Proof of} (2).
Take $M>1$ and $0<k_1<k_2$. Set $[m_1,m_2]=[ z, z M]$.
Then by Corollary~\ref{corst}
\[
\nu[ z, z M]\geq\P(T_{W_z}<\infty)\nu[k_1,k_2],
\]
where
\[
W_z=W( z, z M,k_1,k_2)=(0, z )W(1,
M,k_1,k_2)=:(0, z )W_1
\]
(see Figure \ref{fig2}). Observe that if $k_1$, $M$ and $M/k_2$ tend to infinity, then
\[
W_1=\bigl\{(B,A)\mid Ak_1- B>1, Ak_2+B<M
\bigr\}
\]
grows to $\operatorname{Aff}(\R)$. Thus,
there exists $C>0$ such that if $k_1\geq C$, $M>C$ and $M/k_2\geq C$,
the set $W_1$ contains the compact set $V_0$ defined
in Lemma~\ref{lem-bound-stop-time}. Therefore, $\P(T_{W_z}<\infty)$ is
uniformly bounded from below for large values of $z$.
Moreover, since we require the support of $\nu$ to be unbounded on the
right, one can choose $k_2$ such that $\nu[k_1,k_2] > 0 $ and we conclude.

\begin{figure}[t]

\includegraphics{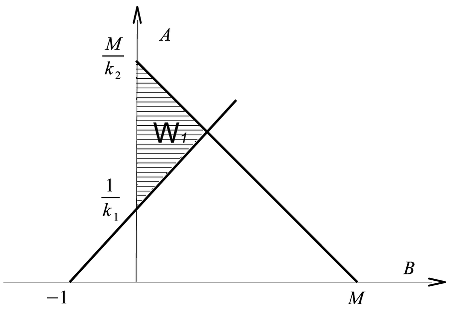}

\caption{The set $W=W(z,zM_1,k_1,k_2)$.}\label{fig2}
\end{figure}

\textit{Step} 3. \textit{Proof of} (3).
Let $a_0$, $b_0$ be sufficiently large
numbers such that Lemma~\ref{lem-bound-stop-time} holds. Take $M>\max
\{
2, 4 a_0^2\}$.

First suppose that $\frac{u_2}{u_1}<\frac{M}{ 4 a_0^2}$.
Take $[m_1,m_2]=[{\mathrm{e}}^x,{\mathrm{e}}^x M]$ and
$[k_1,k_2]= [{\mathrm{e}}^{x+y}u_1,{\mathrm{e}}^{x+y} u_2]$.
For $x>\log(b_0)$, the set
\begin{eqnarray*}
&& W\bigl({\mathrm{e}}^x,{\mathrm{e}}^x M,{
\mathrm{e}}^{x+y}u_1,{\mathrm{e}}^{x+y}
u_2\bigr)
\\
&&\qquad = \bigl\{(B,A)\in\operatorname{Aff}(\R) \mid A {
\mathrm{e}}^{x+y} u_2+B\leq{\mathrm{e}}^{x}M;
 A {\mathrm{e}}^{x+y} u_1-B \geq{\mathrm{e}}^{x}
\bigr\}
\end{eqnarray*}
contains the set
\[
V(y)= \biggl\{(B,A)\in\operatorname{Aff}(\R) \mid B< b_0,
\frac
{2}{{\mathrm{e}}^{y}
u_1} \leq A \leq\frac{M}{{\mathrm{e}}^{y} 2u_2} \biggr\}.
\]
Since $(\frac{M}{{\mathrm{e}}^{y} 2u_2})/(\frac{2}{{\mathrm{e}}^{y}
u_1})=\frac{M u_1}{
4u_2}>a_0^2$, we can apply Lemma~\ref{lem-bound-stop-time} and prove
that there exists $C>$ such that
\[
\frac{\nu[{\mathrm{e}}^{x+y}u_1,{\mathrm{e}}^{x+y}u_2]}{\nu
[{\mathrm{e}}^x,{\mathrm{e}}^x M]}\leq\frac{1}{\P
(T_{V(y)}<\infty)}< C ( 1+y)\qquad\mbox{for all } x>\log
b_0, y>0.
\]
By the previous steps, the last inequality is satisfied for $0 <x \leq
\log b_0$ and all $y>0$.

For general $U=[u_1,u_2]$ with $\frac{u_2}{u_1}\geq\frac{M}{ 4
a_0^2}$, we can deduce (\ref{eqcompactuniflogbound}) covering $U$
with a finite number of small intervals.
\end{pf*}

Since the law of $\log A$ is aperiodic, proceeding as in \cite{BBE} and
\cite{BBD}, one can prove that the family of quotient measures is
asymptotically invariant under the action of $\R^*_+$ and converges to
the Haar measure of $\R^*_+$.

%
\begin{cor}\label{corSV} Under the hypotheses of Theorem~\ref{mthm}
\[
\liminf_{z\to\infty} \delta_{z^{-1}}*\nu(\phi)>0,
\]
where $\phi$ is an arbitrary nonzero and nonnegative element of $C_
c(0,+\infty)$.

Furthermore, for $\phi_1,\phi_2 \in C_ c(0,+\infty)$ and $\phi_2$ not
identically zero,
%
%
\begin{equation}
\label{eqquot} \lim_{z\to\infty}\frac{\delta_{z^{-1}}*\nu(\phi
_1)}{\delta
_{z^{-1}}*\nu
(\phi
_2)}=
\frac{\int_{\R_+^*} \phi_1(a) ((da)/a)}{\int_{\R_+^*}
\phi
_2(a) ((da)/a)}.
\end{equation}
Therefore,
%
%
\begin{equation}
\label{eqSV} \lim_{x\to+\infty} \frac{\delta_{e^{-(x+y)}}*\nu(\phi
)}{\delta
_{e^{-x}}*\nu
(\phi)}=1
\end{equation}
and
\[
\frac{\delta_{e^{-(x+y)}}*\nu(\phi)}{\delta_{e^{-x}}*\nu(\phi
)}\leq K_\phi(1+y)\qquad\mbox{for all $x,y>0$}.
\]
In particular, the function $L(z)=\delta_{z^{-1}}*\nu(\phi)$ is
slowly varying.
\end{cor}

\begin{pf} For the reader's convenience, we present a sketchy proof
(see Proposition 2.2 \cite{BBD} for more details).
First, take a Lipschitz function $\Phi$ whose compact
support contains $(1,M)$ and let $L(z)=\delta_{z^{-1}}*\nu(\Phi)$.
Since the family of measures $\tilde\nu_z=\frac{1}{L(z)} \delta
_{z^{-1}}*\nu$
is vaguely compact, for every sequence we can extract its subsequence
$\tilde\nu_{z_n}$ convergent to a limit measure $\eta$.

For every Lipschitz compactly supported function $\phi$ and $\Psi\in
\mathfrak{F,}$ there exists a compact set $U=U(\phi,\Psi)$ such that
\[
\biggl| \phi\biggl(\frac{\Psi(u)}{z} \biggr)-\phi\biggl(\frac{ A
u}{z} \biggr)
\biggr| \le\frac{ B }z\cdot{\mathbf1}_U \biggl(\frac{ A u}z
\biggr)
\]
and
\begin{eqnarray*}
&& \lim_{n\to\infty} \frac{ | \int\phi(\Psi
(u)/{z_n} )\nu
(du)-\int\phi((A u)/{z_n} )\nu(du) | }{L( z_n)}
\\
&&\qquad \leq \lim
_{n\to\infty} \frac{C| z_n^{-1}b| \nu( a^{-1}z_n U)}{L(
z_n)}
\\
&&\qquad \leq C \eta\bigl(a^{-1} U\bigr)\cdot\lim_{n\to\infty} \bigl|
z_n^{-1}b\bigr|= 0.
\end{eqnarray*}
Thus, the function
\[
h(y)=\delta_{y}*\eta(\phi)=\lim_{n\to\infty}
\frac{\delta_{(0,
z_n^{-1}y)}*_G\nu
(\phi)}{L( z_n)}
\]
on $\R^*_+$ is superharmonic with respect to the action of $\mu_A$, the
law $A_1$.
Since $h$ is positive and continuous, by the Choquet--Deny theorem it
must be a constant function, that is $\delta_{a}*\eta(\phi)=\eta
(\phi)$
for every $a\in\R^*_+$. Because $\eta(\Phi)=1$, then $\eta$ is a fixed
multiple of the Haar measure of $\R_+^*$ and
\[
\lim_{z\to+\infty} \frac{\delta_{z^{-1}}*\nu(\phi)}{\delta
_{z^{-1}}*\nu
(\Phi)}=\frac{\int\phi(a)((da)/{a})}{\int\Phi(a)((da)/{a})}.
\]
This proves (\ref{eqquot}) and (\ref{eqSV}). In particular, if
$\phi
$ is nonzero, by Proposition~\ref{propboundnu}, we have
\[
\liminf_{z\to\infty} \delta_{z^{-1}}*\nu(\phi)\geq\int\phi
(a)\frac
{da}{a}\cdot\liminf_{z\to\infty} \delta_{z^{-1}}*
\nu(\Phi) >0.
\]

Take $k$ such that the support of $\phi$ is contained in $[1/k,k]$. Then
\[
\frac{e^{-(x+y)}*\nu(\phi)}{e^{-x}*\nu(\phi)}\leq\frac{\nu
[{\mathrm{e}}^x/M,{\mathrm{e}}^x
M]}{e^{-x}*\nu(\phi)} \frac{\nu[{\mathrm{e}}^{x+y}/k,{\mathrm
{e}}^{x+y} k]}{\nu[{\mathrm{e}}^x/M,{\mathrm{e}}^x
M]}\leq K (1+y),
\]
because the first quotient is bounded.
\end{pf}

\section{Homogeneity at infinity}\label{sec4}
\label{sechomo}

In this section, we finish the proof of Theorem~\ref{mthm}. The main
idea of the proof is similar to our previous papers \cite{BBD,Bu,BBD2}.
Given a nice function $\phi$ on $\R^*_+$ we define the function
\[
f(x) = \int_{\R^*_+} \phi\bigl(e^{-x}u\bigr)\nu(du).
\]
Behavior at infinity of the measure $\nu$ is coded in the asymptotic
behavior of $f$. To describe $f$, we consider it as a solution of the
Poisson equation
\[
\bar\mu*_\R f(x) = f(x) + g(x),
\]
where $\bar\mu$ is the law of $-\log A$ and the function $g$ is defined
by the equation above. We cannot use the classical renewal theorem,
since the measure $\bar\mu$ is centered. In our previous papers, we
expressed $f$ as a special potential of $g$. However, this approach was
technically involved and it was not possible to establish the optimal
hypotheses. Here, we apply ideas due to Durrett and Liggett \cite{DL},
who studying a similar equation and applying the duality lemma, were
able to reduce the problem to the classical renewal theorem. In
Proposition~\ref{lemlim-poiss-eq}, we determine weak assumptions in
the terms of the Poisson equation that enable to control the asymptotic
behavior of the solution.

In the second part of the section, we apply this result to our problem.
We show that there exist slight perturbations of the functions $f$ and
$g$ defined above which satisfy all the required conditions. Finally,
we deduce our main result proving that the tail of the measure $\nu$
converges at infinity.

%
\begin{prop}\label{lemlim-poiss-eq}
Let $\bar{\mu}$ be a centered probability measure on $\R$ with
finite moment of order $2+\epsilon$ for some $\epsilon>0$
and let $f$ be a continuous function on $\R$ such that
%
%
\begin{equation}
\label{eqcondf} 0\leq f(x) \leq C \bigl(1+x^+\bigr)\quad\mbox{and}\quad\int
_{-\infty}^y f(x)\,dx\leq C \bigl(1+y^+\bigr),
\end{equation}
where $x^+:=\max\{0,x\}$. Let $g$ be the continuous function on $\R$
defined by the Poisson equation:
%
%
\begin{equation}
\label{poissonequation} \bar\mu* f(x) = f(x) + g(x).
\end{equation}
Suppose also that $g$ is directly Riemann integrable,
then
%
%
\begin{equation}
\label{eqlim-poiss-eq-non-cent} \lim_{x\to+\infty} \E\bigl[ f(x+S_t)
\bigr]-f(x) = \frac{-1}{\E[S_{l}]}\int_\R g( x) \,dx,
\end{equation}
where $S_n$ is the random walk of law $\bar{\mu}$ and $t$ and $l$
are the stopping times
\[
t = \inf\{n> 0\dvtx S_n \ge0\}\quad\mbox{and}\quad l = \inf\{n > 0\dvtx
S_n < 0\}.
\]
Moreover, if $\int_\R g(x) \,dx=0$ and $\int_\R|x g(x)| \,dx <\infty$,
%
%
\begin{equation}
\label{eqlim-poiss-eq} \lim_{x\to+\infty} \E\biggl[ \int_x^{x+S_t}
f(z)\,dz \biggr] = \frac{1}{\E
[S_{l}]}\int_\R x g( x) \,dx.
\end{equation}
\end{prop}

The notion of directly Riemann integrable functions is fundamental in
renewal theory and allows to apply the classical renewal theorem to the
function $g$ (see, e.g., Feller \cite{F}).
The proof of this proposition will be given in \hyperref[appendix]{Appendix}.

Let $\nu$ be a $\mu$-invariant Radon measure on $\R$. We would like to
apply the previous proposition to the function $f(x)= \delta_{{\mathrm
{e}}^{-x}}*\nu
(\phi)$ for some fixed positive function $\phi\in C_C^1(\R^*_+)$.
Unfortunately, we are not able to justify that $f$ satisfies all the
required hypotheses. The main reason is that we are not able to control
local properties of a general measure $\nu$, namely its behavior near
0. Thus, the function $f$ may not be sufficiently integrable at
$-\infty$.
However, it turns out that one can slightly translate the measure $\nu
$ to overcome the problem.

For this purpose, given $\phi\in C_C^1(\R^*_+)$ and $w_0>0$ define
\begin{eqnarray*}
f_\phi(x) &:=& \int_{\R} \phi
\bigl(e^{-x}(u-w_0)\bigr) \nu(du),
\\
g_\phi(x) &:=& \bar\mu*_\R f_{\phi}(x) -
f_{\phi}(x).
\end{eqnarray*}
Observe that $f_\phi(x)=\delta_{{\mathrm{e}}^{-x}}*\nu_0(\phi)$ where
$\nu_0$ is the
measure $\nu$ translated by $w_0$:
\[
\nu_0(\phi)=\int_{\R} \phi(u-w_0)
\nu(du),
\]
that is, the invariant measure of the SDS obtained by conjugating the
original one with the translation by $w_0$:
\[
\psi_0(x)= \psi(x+w_0)-w_0.
\]
Denote by $\mu_0$ its law. Observe that
$A(\psi_0)=A(\psi)$ and we can choose $B(\psi_0)=Aw_0+w_0+B$,
hence $\mu_0$ satisfies our main hypotheses if $\mu$ does. Since the
translation does not change the asymptotic behavior,
the measures $\nu_0$ and $\nu$ behave in the same way at $+\infty$, namely
%
%
\begin{equation}
\label{eqnu-f} \lim_{x\to+\infty} f_\phi(x)-
\delta_{{\mathrm{e}}^{-x}}*\nu(\phi)= 0.
\end{equation}
In fact,
\begin{eqnarray*}
\int_{-\infty}^{+\infty} \bigl|\phi\bigl(e^{-x}(u-w_0)
\bigr) -\phi\bigl(e^{-x}u\bigr)\bigr|\nu(du)&\leq& C \int
_0^\infty\bigl|e^{-x}w_0\bigr|
\mathbf{1}_{[{\mathrm
{e}}^xm,{\mathrm{e}}^x(M+w_0)]} \nu(du)
\\
&\leq& C \bigl|e^{-x}w_0\bigr|\log\bigl({\mathrm{e}}^x(M+w_0)
\bigr),
\end{eqnarray*}
when $\operatorname{supp}(\phi)\subset[m,M]$. Summarizing, translation
of the
invariant measure does not change the problem we study, nor our
assumptions. Existence of a
corresponding $w_0$ is provided by the following lemma, whose proof
will be given in \hyperref[appendix]{Appendix}.

%
\begin{lem}
\label{lemtranslation}
There exists $w_0>0$ such that for all $\phi\in C_C^1(\R_+^*)$ the
functions $f_\phi$ and $g_\phi$ satisfy the hypotheses of Proposition
\ref{lemlim-poiss-eq}.
\end{lem}

Now we are ready to prove our main result.

\begin{pf*}{Proof of Theorem~\ref{mthm}}
We claim that $\int g_\phi(y) \,dy=0$. In fact for all $y$ we can apply
Corollary~\ref{corSV}
\[
\lim_{x\to+\infty}\frac{f_\phi(x+y)}{f_\phi(x)}=\lim_{x\to
+\infty}
\frac
{\delta_{{\mathrm{e}}^{-(x+y)}}*\nu_0(\phi)}{{\mathrm{e}}^{-x}*\nu
_0(\phi)}=1;
\]
thus, since $\E(S_t)$ is finite, by dominated convergence $\E(f_\phi
(x+S_t)/f_\phi(x))$ also converges to 1. Fix $\varepsilon>0$, then
there exists $x_\varepsilon$ such that for all $x\geq x_\varepsilon$
\[
\biggl|\E\bigl[f_\phi(x+S_t) \bigr]-f_\phi(x)+
\frac{1}{\E S_t}\int g_\phi(y)\,dy \biggr| <\varepsilon
\]
and
\[
\biggl|
\frac{\E[f_\phi(x+S_t)]}{f_\phi(x)}-1 \biggr| <\varepsilon.
\]
Therefore, $f_\phi(x)\geq|\int g_\phi(y) \,dy|/(\varepsilon\E S_t) -1$.
Since by Lemma~\ref{lemtranslation},
\[
\int_{-\infty}^xf_\phi(y) \,dy<
C(1+x),
\]
for all $x>x_\varepsilon>0$
\[
C(1+x)\geq\int_{x_\varepsilon}^xf_\phi(y) \,dy
\geq\biggl(\frac
{\llvert\int g_\phi(y) \,dy\rrvert}{\varepsilon\E S_t} -1 \biggr)
(x-x_\varepsilon).
\]
That is,
\[
\biggl|\int_\R g_\phi(y) \,dy \biggr|\leq\varepsilon\E
S_t \biggl(\liminf_{x\to+\infty} \frac{C (1+x)}{x-x_\varepsilon} +1
\biggr)=\varepsilon\E S_t(C +1).
\]
Letting $\varepsilon\searrow0$, we conclude.

In view of Corollary~\ref{corSV}, the quotient $f_\phi(x+y)/f_\phi
(x)$ is uniformly dominated by $1 +S_t$ for $x>0$ and $0<y <S_t$, thus
\[
\lim_{x\to\infty} \int_0^{S_t}
\frac{ f_\phi(x+y)}{ f_\phi(x)} \,dy = \int_0^{S_t} 1 \,dy =
S_t. \qquad\mbox{ $\P$ a.s.}
\]
By Fatou's lemma,
%
%
\begin{equation}
\label{eq-fatou} \qquad\liminf_{x\to\infty}\E\biggl[ \int
_0^{S_t} \frac{ f_\phi(x+y)}{
f_\phi(x)} \,dy \biggr]\geq\E
\biggl[\liminf_{x\to\infty} \int_0^{S_t}
\frac{
f_\phi
(x+y)}{ f_\phi(x)} \,dy \biggr]=\E[S_t].
\end{equation}
Therefore, by Proposition~\ref{lemlim-poiss-eq},
\begin{eqnarray*}
\limsup_{x\to\infty} f_\phi(x) &=& \limsup
_{x\to\infty} \frac{\E[\int_0^{S_t} f_\phi(x+y) \,dy ]
}{\E[\int_0^{S_t} (f_\phi(x+y)/{ f_\phi(x)}) \,dy ] }
\\
&\leq&\frac
{1}{\E[S_{l}]\E[S_t]}\int
_\R g_\phi(x)x\,dx. %
\end{eqnarray*}
In particular, this proves that $f_\phi(x)$ is bounded above. Since by
Corollary~\ref{corSV}, we already know that $f_\phi(x)$ is bounded
below, $\int_0^{S_t} \frac{ f_{\phi_i}(x+y)}{ f_{\phi}(x)} \,dy< CS_t$.
This\vspace*{1pt} allow to use the dominated convergence theorem instead of Fatou's
lemma in (\ref{eq-fatou}) and to replace the inferior limit with the
real limit and the inequality with the equality. Thus, we have
\begin{eqnarray*}
\lim_{x\to\infty} f_{\phi}(x) &=& \lim_{x\to\infty}
\frac{\E[\int_0^{S_t} f_{\phi}(x+y) \,dy ]
}{\E[\int_0^{S_t}( f_{\phi}(x+y)/ f_{\phi}(x))
\,dy ] }
\\
& =& \frac{1}{\E[S_{l}]\E[S_t]}\int_\R
g_{\phi}(x)x\,dx
=\frac{1}{\sigma
^2}\int_\R
g_{\phi}(x)x\,dx, %
\end{eqnarray*}
where $\sigma^2 = \int x^2 \bar\mu(dx)$ (see \cite{F} for the proof
that $\E[S_{l}]\E[S_t]=\sigma^2$).

To conclude, take a nonzero nonnegative function $\Phi\in
C_c^1(0,+\infty)$. We have proved that the following limit exists:
\[
\lim_{z\to+\infty}\delta_{z^{-1}}*\nu(\Phi)=\lim
_{x\to+\infty
}f_\Phi(x)=C
\]
and by Corollary~\ref{corSV} the constant $C$ is strictly positive.
The same corollary also implies that for all $\phi\in C_c(0,+\infty)$
\begin{eqnarray*}
\lim_{z\to+\infty}\delta_{z^{-1}}*\nu(\phi)&=&\lim
_{z\to+\infty
}\frac{\delta
_{z^{-1}}*\nu(\phi)}{\delta_{z^{-1}}*\nu(\Phi)} \lim_{z\to
+\infty
}\delta
_{z^{-1}}*\nu(\Phi)
\\
&=& \frac{C}{\int_\R\Phi(a)((da)/a)}\int_\R\phi
(a)\frac{da}{a}.
\end{eqnarray*}\upqed
\end{pf*}

\section{Uniqueness of the invariant measure}\label{sec5}
\label{secunique}
\mbox{}

\begin{pf*}{Proof of Theorem~\ref{teouniq}}
Notice first that for any compact set $K$
\begin{eqnarray*}
\lim_{n\to\infty} {\mathbf1}_K\bigl(X_n^y
\bigr)\bigl|X_n^y - X_n^{y'}\bigr| &\le&
\bigl|y-y'\bigr| \limsup_{n\to\infty} A_1\cdots
A_n {\mathbf1}_K\bigl(X_n^y\bigr)
\\
&=& \bigl|y-y'\bigr| \limsup_{n\to\infty}\frac{ X_n^y {\mathbf1}_K(X_n^y)} {
X_n^y/{(A_1\cdots A_n )}}
\\
&\leq& \limsup_{n\to\infty}\frac{ C(K)} {
{X_n^y}/({A_1\cdots A_n })}.
\end{eqnarray*}
%
Thus, it is sufficient to prove that
\[
\lim_{n\to\infty}\frac{X_n^y}{A_1\cdots A_n}=+\infty.
\]
Notice that the sequence $\frac{X_n^y}{A_1\cdots A_n }$ in
nondecreasing. Indeed, since
$\Psi_n(X_{n-1}^y) \ge A_n X_{n-1}^y$,
\[
\frac{X_n^y}{A_1\cdots A_n }=\frac{\Psi_n(X_{n-1}^y)}{A_1\cdots A_n
}\geq\frac{X_{n-1}^y}{A_1\cdots A_{n-1} }.
\]
Therefore, it is enough to justify that for arbitrary large fixed $M>0$
the sequence is a.s. at least once greater than $M$. Let
\[
U_{\beta, \gamma}:= \bigl\{\Psi\in\mathfrak{F}| \Psi[0,+\infty
)\subseteq[
\beta,+\infty)\mbox{ and } A(\Psi)<\gamma\bigr\}
\]
and
\[
V_\alpha:= \bigl\{\Psi\in\mathfrak{F}| A(\Psi)<\alpha\bigr\}.
\]
In view of our hypotheses, there exist $\alpha<1$, $\beta>0$, and
$\gamma$ such that these two sets have positive probability. For a
fixed $x_0$, take $N>0$ such that $\alpha^{N-1} M \gamma x_0 < \beta$
and let $\psi_0=\psi_1\psi_2$ with $\psi_1\in U_{\beta,\gamma}$ and
$\psi_2\in V_\alpha^{N-1}$. We claim that
%
%
\begin{equation}
\label{eq-psi-dil} \frac{\psi_0(x)}{A(\psi_0) x}>M\qquad\mbox{for all }
0\leq x\leq x_0.
\end{equation}
In fact,
\[
\psi_0(x)= \psi_1\bigl(\psi_2(x)\bigr)\geq
\beta> M \bigl(\gamma\alpha^{N-1} x_0\bigr)>M A(
\psi_1) A(\psi_2) x>MA(\psi_0)x. %
\]

Observe that since $X_n^y$ is recurrent, there exists $x_0>1$ such that
$\P[0\leq X_n^y <x_0\mbox{ i.o.}]=1$ for every $y\geq0$.
Let us fix $y$, $x_0$ and define a sequence $T_k$ of hitting times of $[0,x_0]$
%
\begin{eqnarray*}
T_0&=& 0,
\\
T_k &=& \inf\bigl\{ n>T_{k-1}+N\dvtx X_n^y
<x_0 \bigr\}.
\end{eqnarray*}
By recurrence, all $T_k$ are almost surely finite.
Let $\Psi_i^j:=\Psi_j\circ\cdots\circ\Psi_{i+1}$, then $\{\Psi
_{T_k}^{T_k+N}\}$ is a sequence of i.i.d. random transformations
distributed as $\mu^N$. Since $\mu^N(U_{\beta,\gamma}V_\alpha^{N-1})>0$
there exists almost surely $k_0$ such that $\Psi
_{T_{k_0}}^{T_{k_0}+N}\in U_{\beta,\gamma}V_\alpha^{N-1}$. Then, by
(\ref{eq-psi-dil}), we have
\[
\frac{X_{T_{k_0}+N}^y}{A_1\cdots A_{T_{k_0}+N}}=\frac{\Psi
_{T_{k_0}}^{T_{k_0}+N}(X_{T_{k_0}}^y)}{A_1\cdots A_{T_{k_0}+N} }\geq
\frac{\Psi
_{T_{k_0}}^{T_{k_0}+N}(X_{T_{k_0}}^y)x_0}{A_{T_{k_0}+1}\cdots
A_{T_{k_0}+N} X_{T_{k_0}}^y}>M.
\]\upqed
\end{pf*}

\section{Examples}\label{sec6}
\label{secexamples}

In this section, we present some of the more significant classes of
stochastic dynamical system to which the results of the previous
sections apply.

\subsection{The random difference equation}\label{sec6.1}
The first example is naturally the SDS induced by random affinities,
that is $\Psi_n(x) = A_nx+B_n$, for a random pair $(B_n,A_n) \in\R
\times\R^*_+$. Then $X_n^x$ is given by formula (\ref
{eqaffinerecursion}). This process is called the random difference
equation or
the affine recursion. It is well known that under the assumptions of
Theorem~\ref{mthm} this process is recurrent and locally contractive,
thus it possesses a unique invariant Radon measure $\nu$; see \cite
{BBE}. Behavior of this measure at infinity was studied previously in
\cite{Bu,BBD,BBD2} under a number of additional strong hypotheses.
Theorem~\ref{mthm} provides an optimal result, in the sense that the
hypotheses implying existence and uniqueness of the invariant measure,
are sufficient also to deduce that this measure must behave at infinity
like $\frac{C \,dx}x$.

\subsection{Stochastic recursions with unique invariant measure}\label{sec6.2} Our
results can also be applied to a more general class of stochastic
recursions that behave at infinity as $Ax$ [i.e., $\Phi(x)\sim Ax$ for
large $x$]. In the contracting case ($\E[\log A]<0$), those recursions
were studied by Goldie \cite{Go} (see also Mirek \cite{M}, who
described this class of recursions in general settings, including more
examples). Just to give some concrete examples let us mention that our
results are valid (under rather obvious and easy to formulate
assumptions) for the following examples:
\begin{itemize}
\item$\Psi_{1,n}(x) = \max\{A_nx,B_n\}+C_n$, for $A_n,B_n,C_n>0$.
\item$\Psi_{2,n}(x) = \sqrt{A_n^2 x^2 + B_nx + C_n}$, for $A_n,B_n,C_n
> 0$ and $\Delta= B^2-4A^2C \le0$.
\end{itemize}
%
In both cases above, the mappings $\Psi_{i,n}$ are Lipschitz with the
Lipschitz\break coefficient equal to $A$. This is obvious for the first
example. For the second one, denote $x_0 = -\frac{B}{2A^2}$, $D =
-\frac
{\Delta}{4A^2}$. Observe that since $\Psi_{2,n}(x)=\break \sqrt
{A^2(x-x_0)^2+D}$, its derivative
\[
\Psi'_{2,n}(x)=\frac{A^2(x-x_0)}{\sqrt{A^2(x-x_0)^2+D}}=A\frac
{1}{\sqrt{1+ (D/ (A^2(x-x_0)^2))}}
\nearrow A
\]
is an increasing function that tends to $A$.
Hence, under appropriate moment assumptions, the SDS on $\R_+$
generated by the random functions defined above satisfies assumptions
of both Theorems~\ref{mthm}~and~\ref{teouniq}. Therefore, the
corresponding random process possesses a unique invariant measure,
which behaves at infinity like $\frac{C\,dx}{x}$.

If we do not suppose $\Delta=B^2-4A^2C \le0$, then $\Psi_{2,n}$ are
still asymptotically linear functions to which Theorem~\ref{mthm}
applies, but we cannot prove uniqueness of an invariant measure.

\subsection{Random automorphisms of the interval $[0,1]$}\label{sec6.3}
\label{secinterval}
SDSs acting on the real line after conjugating by an appropriate
function can be seen as random automorphisms of the interval $[0,1]$
fixing the end points. Our key property (\ref{AL}) is translated in this
setting into requiring that the automorphisms ``reflect'' at the same way
in 0 and in 1, in the sense that the derivative in these two points has
to be the same. The $B$ term is then related to the term of order two
at these end points [or order $2-\alpha$, if we conjugate a SDS that
satisfy (\ref{ALal})]. More precisely, we have the following.

%
\begin{cor}
Consider a SDS on $[0,1]$ defined by random functions $\phi\in
C([0,1])$ fixing $0$ and $1$, differentiable at the extremities of the
interval and such that
\[
\phi'(0)=\phi'(1)=:a_\phi.
\]
Let
\begin{eqnarray*}
\beta^0_1 &=& \inf_{u\in[0,1/2]}
\bigl(1- \phi(u)\bigr)>0,\qquad\beta^0_2 = \inf
_{u \in[0,1/2]}\frac{ \phi(u)}{u}>0,
\\
\beta^0_3 &=& \sup_{u\in[0,1/2]} \biggl\llvert\frac{\phi(u)-a_\phi
u}{u^2}\biggr\rrvert<\infty,
\\
\beta^1_1 &=& \inf_{u\in[1/2,1]} \phi(u)>0,\qquad
\beta^1_2 = \inf_{u \in[1/2,1]}
\frac{1-\phi(u)}{1-u}>0,
\\
\beta^1_3 &=& \sup
_{u\in[1/2,1]} \biggl\llvert\frac{\phi(u)-1-a_\phi
(u-1)}{(u-1)^2}\biggr\rrvert<\infty.
\end{eqnarray*}
Suppose that $\E[|\log a_\phi|^{2+\varepsilon} ]<\infty
$, $\E
[|\log\beta_k^i|^ {2+\varepsilon} ]<\infty$, for some
$\varepsilon>0$, all
$i,k$, and that $\E[\log a_\phi]=0$. Then the SDS on $[0,1]$ is
conjugated to an asymptotically linear SDS on $\R$ that satisfy the
hypotheses of Theorem~\ref{mthm}. Therefore, there exists at least one
invariant Radon measure $\tilde\nu$ on $(0,1)$ and for every such
a measure $\tilde{\nu}$, which charges a neighborhood of 0, there
exists a strictly positive constant $C$ such that for all $0<a<b<1$
\[
\lim_{z\to+\infty}\tilde{\nu}(a/z,b/z)=C \log b/a.
\]
\end{cor}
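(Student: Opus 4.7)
The idea is to conjugate the SDS on $(0,1)$ by an explicit decreasing homeomorphism $s\colon(0,1)\to\R$ chosen so that $s\circ\phi\circ s^{-1}$ is asymptotically linear at both $\pm\infty$ with the \emph{same} slope $a_\phi^{-1}$; then apply Theorem~\ref{mthm} on $\R$ and translate back. The symmetry hypothesis $\phi'(0)=\phi'(1)$ is exactly what allows a single slope to work at both ends of $\R$.

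\textbf{Choice of conjugation and verification of} (AL). Take $s(u)=\tfrac{1}{u}-\tfrac{1}{1-u}$, a smooth strictly decreasing homeomorphism $(0,1)\to\R$ with $s(u)=1/u+O(1)$ near $0$ and $s(u)=-1/(1-u)+O(1)$ near $1$. Let $\Psi:=s\circ\phi\circ s^{-1}$. For $x\to+\infty$ (so $u=s^{-1}(x)\to 0^+$, $u\sim 1/x$) the bounds $\phi(u)\ge\beta_2^0 u$ and $|\phi(u)-a_\phi u|\le\beta_3^0 u^2$ force $\phi(u)<1/2$ once $u$ is small enough, whence the small-$u$ expansion of $s$ applies to $\phi(u)$ as well and
\[
  \Psi(x)\;=\;\frac{1}{\phi(u)}+O(1)\;=\;\frac{1}{a_\phi u+O(u^2)}+O(1)\;=\;\frac{x}{a_\phi}+O(1),
\]
with an $O(1)$-term controlled by $a_\phi$ and $\beta_3^0$. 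The symmetric expansion at $u\to 1^-$, using $\phi'(1)=a_\phi$ together with $\beta_3^1$, gives $\Psi(x)=x/a_\phi+O(1)$ as $x\to-\infty$ with the \emph{same} slope. For $x$ in the (random) compact region where neither asymptotic regime applies, $s^{-1}(x)$ lies in a compact subinterval of $(0,1)$ on which $\phi$ is bounded away from $0$ and $1$ in terms of the $\beta_1^i,\beta_2^i$'s. Folding the three contributions into a single measurable $B(\Psi)$ yields (AL) on $\R$ with $A(\Psi)=a_\phi^{-1}$ and $\log^+B(\Psi)$ bounded by a fixed linear combination of $|\log a_\phi|$ and the $|\log\beta_k^i|$'s; hence hypotheses \eqref{H1}--\eqref{H2} follow from $\E[\log a_\phi]=0$ and the assumed $(2+\eps)$-moments, while \eqref{H3} is taken as a standing aperiodicity assumption.

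\textbf{Transfer and conclusion.} Existence of a non-trivial invariant Radon measure $\widetilde\nu$ on $(0,1)$ follows from Subsection~\ref{ssec: existence} applied to the conjugated system: the $\Psi$'s fix no compact subset of $\R$ (they ``fix $\pm\infty$''), and the dominating affine recursion is recurrent. Given any such $\widetilde\nu$ that charges a neighborhood of $0$, its push-forward $\nu:=s_*\widetilde\nu$ is a $\mu$-invariant Radon measure on $\R$ not null near $+\infty$, so Theorem~\ref{mthm} yields $\delta_{z^{-1}}*\nu\to C\,dx/x$ weakly on $\R^*_+$ for some $C>0$. Monotone decrease of $s$ gives $\widetilde\nu\big((a/z,b/z)\big)=\nu\big((s(b/z),s(a/z))\big)$, and since $s(a/z)/z\to 1/a$, $s(b/z)/z\to 1/b$, absolute continuity of the limit measure lets us pass to the limit:
\[
  \lim_{z\to+\infty}\widetilde\nu\big((a/z,b/z)\big)\;=\;C\int_{1/b}^{1/a}\frac{dt}{t}\;=\;C\log(b/a).
\]

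\textbf{Main obstacle.} The only delicate step is the bookkeeping in the verification of (AL) --- producing a measurable $B(\Psi)$ with the correct log-moment by uniformly controlling the error of $s\circ\phi\circ s^{-1}$ at both ends of $\R$ simultaneously, while also handling the intermediate compact region where $\phi(u)$ may escape the small-$u$ or large-$u$ regime and $\beta_1^i,\beta_2^i$ take over. Once this is done, everything else is standard transfer of structure along the conjugation $s$.
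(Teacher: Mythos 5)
Your proposal is correct and follows essentially the same route as the paper: conjugation by the explicit homeomorphism $u\mapsto 1/u-1/(1-u)$ (the negative of the paper's $r$, which is immaterial), verification of (AL) with $A(\Psi)=a_\phi^{-1}$ via the Taylor bounds $\beta_3^i$ near the endpoints and the $\beta_1^i,\beta_2^i$ bounds on the intermediate compact region, and transfer of the tail asymptotics back to $(0,1)$ — this is exactly the content of the paper's Lemma~\ref{lem: AL interval} and the subsequent two-line computation. You are also right that aperiodicity of $\log a_\phi$ must be added as a standing hypothesis for \eqref{H3}.
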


\begin{pf}
Let
\[
r(u)=-\frac{1}{u}+\frac{1}{1-u}
\]
be a diffeomorphism of $(0,1)$ onto $\R$. In the technical Lemma~\ref{lemALinterval}, whose proof is postponed to \hyperref[appendix]{Appendix}, we prove that
the conjugated function $\Psi_\phi=r \circ\phi\circ r^{-1}$ satisfy
(\ref{AL}) for $A(\Psi_\phi)=1/{a_{\phi}}$ and
\[
B(\Psi_\phi)< C_r \biggl(\frac{(1+a_\phi+\beta_3^0)}{a_{\phi}
\beta^0_2} +
\frac{1}{\beta^0_1} + \frac{(1+a_\phi+\beta_3^1)}{a_{\phi}
\beta^1_2} + \frac{1}{\beta^1_1} \biggr),
\]
where $C_r$ depends only on the function $r$.
Thus, under the hypotheses of the corollary, the conjugated SDS
satisfies the assumptions of our main theorem.

Let $\tilde\mu$ be the law of $\phi$ and $\mu=r*\tilde\mu*
r^{-1}$ be the law of the conjugated SDS on~$\R$. Then $\nu$ is a
$\mu
$-invariant Radon measure on $\R$ if and only if $\tilde\nu
=r^{-1}*\nu$ is a \mbox{$\tilde\mu$-}invariant Radon measure on $(0,1)$.
Then by Theorem~\ref{mthm} and since $|r(u)+1/u|<2$ for $0<u<1/2$,
\begin{eqnarray*}
\biggl|\tilde\nu\biggl(\frac{a}z,\frac{b}z \biggr)-
\nu\biggl(-\frac{z}a,-\frac{z}b \biggr) \biggr| &=& \biggl|\nu\biggl(r
\biggl(\frac{a}z \biggr),r \biggl(\frac{b}z \biggr) \biggr)-\nu
\biggl(-\frac{z}a,-\frac{z}b \biggr) \biggr|
\\
&\leq& \nu\biggl(-\frac{z}a-2, -\frac{z}a+2 \biggr)+\nu
\biggl(-\frac{z}b-2, -\frac{z}b+2 \biggr) \to0
\end{eqnarray*}
for $z\to+\infty$. Thus,
\[
\lim_{z\to+\infty}\tilde\nu(a/z,b/z )=\lim_{z\to
+\infty}
\nu\biggl(-\frac{z}a,-\frac{z}b \biggr)= C \log b/a.
\]\upqed
\end{pf}

\subsection{Additive Markov processes and power functions}\label{sec6.4}
When an asymptotically linear SDS is conjugated by a homeomorphism of
the real line which behaves as the exponential at infinity, it is
transformed into a SDS that is asymptotically a translation or, by the
reversed conjugation, a power function.

More precisely, consider a SDS generated by functions $\phi$ such that
%
%
\begin{equation}
\label{eqasympt-trans} \bigl|\phi(x)-x+\operatorname{sign}(x)u_\phi\bigr|\leq
v_\phi{\mathrm{e}}^{-|x|}
\end{equation}
for some constants $u_\phi$ and $v_\phi$. This class contains mappings
of $[0,\infty)$ that are equal to translations outside a bounded set,
that is, a Markov additive process as defined in Aldous (\cite{Ald89}, Sections~C11,~C33).
Let $s$ be a continuous bijection of $\R$ such that
\[
s(x)={\mathrm{e}}^x\qquad\mbox{for } x>1 \quad\mbox{and}\quad s(x)=-{
\mathrm{e}}^{-x}\qquad\mbox{for } x<-1.
\]
Then the SDS generated by $\psi_\phi(x)=s\circ\phi\circ s^{-1}$
satisfies hypothesis (\ref{AL}) with $A(\psi_\phi)={\mathrm{e}}^{-u_\phi
}$. Hence,
under moment conditions that can be obtained with standard
calculations, if $\E(u_\phi)=0$ there exists an invariant measure which
behaves at infinity as the Lebesgue measure $dx$, that is,
\[
\lim_{z\to+\infty}\tilde\nu(\alpha+z,\beta+z)=C(\beta-\alpha)
\]
for every measure of unbounded support, some constant $C>0$ and all
$\beta>\alpha$.

In a similar way, a SDS generated by function $\phi$ such that
\[
|x|^a \cdot\operatorname{sign}(x) {\mathrm{e}}^{-b_1\log(|x|+2)^\alpha}
\leq\phi
(x)\leq|x|^a\cdot\operatorname{sign}(x) {\mathrm{e}}^{+b_1\log
(|x|+2)^\alpha}
\]
for some $\alpha$ is associated to an $\alpha$-asymptotically linear
system by the reverse conjugation $\psi_\phi(x)=s^{-1}\circ\phi
\circ
s$ and $A(\psi_\phi)=a$. Thus, if $\E(\log a)=0$ and some moments are
finite, for any invariant measure $\tilde{\nu}$, whose support in
unbounded on the positive half-line, there exists a strictly positive
constant $C$ such that for all $1< \alpha<\beta$
\[
\lim_{z\to+\infty} \tilde{\nu}\bigl(\alpha^z,
\beta^z\bigr)=C \log\frac{\log
\beta}{\log\alpha}.
\]

\subsection{Population of Galton--Watson tree with random reproduction law}\label{sec6.5}
Consider the following model of reproduction of a population. Let $\{
\rho_\omega|\omega\in\Omega\}$ be the set of probability measures on
the set of natural numbers $\N$ and $\lambda(d\omega)$ be a probability
law on $\Omega$. At each generation, a law of reproduction $\rho
_\omega
$ is chosen according to $\lambda(d\omega)$ and each individual $j$ is
replaced by $r_j$ offsprings, $r_j$ chosen according to the law $\rho
_\omega$ and independently from the other individuals. To prevent the
extinction of the population, a random immigration $i_\omega$ it added
to the population. More formally, if the population consists of $x\in
\N
$ individuals, the population of the following generation is
\[
\psi_{\omega,\mathbf{r}}(x)=i_\omega+\sum_{j=1}^xr_j,
\]
where the reproduction law $\omega\in\Omega$ is chosen according to
$\lambda(d\omega)$, $\mathbf{r}=\{r_j\}_j$ are i.i.d. of law $\rho
_\omega
$ and $i_\omega$ is a random variable. If every generation is
independent from the previous one, then the evolution of the population
is a SDS on $\mathcal{R}=\N$ of law $\mu(d\psi)=\otimes\rho
_\omega
(d\mathbf{r})\lambda(d\omega)$. If $\E r_1^2<\infty$, the law of
iterated logarithm
proves that the $\psi_{\omega,\mathbf{r}}$ are $\mu$-almost surely
$\alpha$-asymptotically linear with an error of order $x^\alpha$ for
all $\alpha>1/2$ and
\[
A(\psi)=A_\omega=\int_\N r \rho_\omega(dr)=
\mbox{average number of offspring per individual for $\rho_\omega$}.
\]
Unlike the classical Galton--Watson process, in our context $A_\omega$
is not constant, but varies from one generation to another. The key
parameter, that decides whether the system is recurrent, is $\E(\log
A_\omega)=\int\log A_\omega\lambda(d\omega)$. To apply Theorem
\ref
{mthm}, we need to control moments of $B(\psi)$. The details are stated
in the following lemma. Our estimates are fairly rough and the
hypotheses could be probably improved, but this go beyond the purpose
of our paper.

%
\begin{lemma} Suppose $\E(r_1^4)=\int_\Omega\int_\N r_1^4 \rho
_\omega
(dr)\lambda(d\omega)<\infty$. Let $\alpha>3/4$ and
\[
B(\psi)=B_{\omega,\mathbf{r}}=\sup_{x\in\N}\frac{\llvert\psi
_{\omega,\mathbf{r}}(x)-A_\omega x\rrvert}{x^\alpha+1},
\]
then there exists a finite constant $C_\alpha$ that only depends on
$\alpha$ such that
\[
\E\bigl(\bigl(\log^+B(\psi)\bigr)^{2+\epsilon}\bigr) \leq C_\alpha
\bigl(1+ \E\bigl(\bigl(\log^+ i_\omega\bigr)^{2+\epsilon}\bigr)+ \E
\bigl(r_1^4\bigr) \bigr).
\]
\end{lemma}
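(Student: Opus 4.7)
My plan is to reduce the supremum defining $B$ to a supremum of a centered random walk and to bound its tail via a dyadic maximal inequality at the fourth moment level, which is exactly what the hypothesis $\alpha>3/4$ allows us to convert into a bound on logarithmic moments.

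First I would decompose: writing $M_x=\sum_{j=1}^x(r_j-A_\omega)$, one has $\psi_{\omega,\mathbf{r}}(x)-A_\omega x=i_\omega+M_x$, so
\[
B(\psi)\;\leq\;i_\omega\;+\;N(\omega,\mathbf{r}),\qquad N(\omega,\mathbf{r}):=\sup_{x\geq 1}\frac{|M_x|}{x^\alpha+1}.
\]
Using $\log^+(a+b)\leq \log 2+\log^+a+\log^+b$ and the convexity inequality $(x+y+z)^{2+\epsilon}\le 3^{1+\epsilon}(x^{2+\epsilon}+y^{2+\epsilon}+z^{2+\epsilon})$, the statement reduces to proving
\[
\E[(\log^+ N)^{2+\epsilon}]\;\leq\;C_\alpha(1+\E r_1^4).
\]

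Next, conditional on $\omega$, the process $(M_x)_{x\geq 0}$ is a centered random walk with i.i.d.~increments of fourth moment $\leq C\,V(\omega)$, where $V(\omega):=\E_\omega r_1^4$ (using $|r_1-A_\omega|^4\leq 8(r_1^4+A_\omega^4)$ and Jensen on $A_\omega^4\leq \E_\omega r_1^4$). I would apply a dyadic decomposition: for each $k\geq 0$, on the block $2^k\le x<2^{k+1}$, Doob's $L^4$ maximal inequality combined with the Marcinkiewicz--Zygmund (or Rosenthal) bound $\E_\omega M_n^4\leq C(n^2 V(\omega)+n V(\omega))\leq C\, n^2 V(\omega)$ yields
\[
\P_\omega\!\left(\sup_{x\le 2^{k+1}}|M_x|>t\,2^{k\alpha}\right)
\;\leq\;\frac{C\,2^{2(k+1)}V(\omega)}{t^4\,2^{4k\alpha}}
\;=\;\frac{C'\,V(\omega)}{t^4}\,2^{k(2-4\alpha)}.
\]
Since $\alpha>3/4$ gives $2-4\alpha<-1$, the geometric series in $k$ converges, and a union bound over blocks produces the fourth-moment tail
\[
\P_\omega(N>t)\;\leq\;\frac{K_\alpha\,V(\omega)}{t^4},\qquad\text{hence}\qquad \P(N>t)\leq \frac{K_\alpha\,\E r_1^4}{t^4}.
\]

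Finally, I would convert this polynomial tail into a logarithmic moment bound: writing
\[
\E[(\log^+ N)^{2+\epsilon}]=\int_0^\infty (2+\epsilon)\,s^{1+\epsilon}\,\P(N>e^s)\,ds
\leq\int_0^\infty(2+\epsilon)\,s^{1+\epsilon}\min\!\bigl(1,K_\alpha\E r_1^4\,e^{-4s}\bigr)\,ds,
\]
and splitting the integral at $s_0=\max\{0,\tfrac14\log(K_\alpha\E r_1^4)\}$, the first piece contributes $\lesssim s_0^{2+\epsilon}$ and the second is uniformly bounded; since $(\log x)^{2+\epsilon}\leq C_\epsilon(1+x)$, this gives $\E[(\log^+ N)^{2+\epsilon}]\leq C(1+\E r_1^4)$, and combining with the $i_\omega$ term completes the proof. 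The main technical point is the dyadic $L^4$ maximal estimate, where the hypothesis $\alpha>3/4$ is used in a tight way to make the geometric series converge; any weaker assumption on $\alpha$ would either force a higher moment on $r_1$ or require a sharper (e.g.\ LIL-type) control.
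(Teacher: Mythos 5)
Your proof is correct, but at the core step it takes a genuinely different route from the paper. Both proofs start from the same decomposition $B(\psi)\le i_\omega+\sup_x|M_x|/(x^\alpha+1)$ with $M_x=\sum_{j=1}^x(r_j-A_\omega)$, and both rest on the quenched fourth-moment bound $\E_\omega M_x^4\le C x^2\,\E_\omega r_1^4$. Where you diverge is in how the supremum over $x$ is handled: the paper simply bounds $\big(\log^+\sup_x a_x\big)^{2+\epsilon}\le C\sum_x a_x^4$ and sums the fourth moments over all $x\in\N$, so that convergence of $\sum_x x^{2-4\alpha}$ is exactly where $\alpha>3/4$ enters; you instead run a dyadic Doob $L^4$ maximal inequality over blocks $2^k\le x<2^{k+1}$ to get a genuine tail estimate $\P(N>t)\le K_\alpha\,\E r_1^4/t^4$, and then convert the polynomial tail into the logarithmic moment. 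Your route is slightly heavier machinery but strictly more efficient: the geometric series $\sum_k 2^{k(2-4\alpha)}$ converges as soon as $2-4\alpha<0$, i.e.\ for all $\alpha>1/2$ — not only for $2-4\alpha<-1$ as you state — so your argument in fact proves the lemma under the weaker hypothesis $\alpha>1/2$, which matches the natural LIL threshold mentioned in the surrounding text and realizes the improvement the authors say is ``probably'' possible. The only blemishes are cosmetic: the claim that $\alpha>3/4$ is ``used in a tight way'' is not accurate for your own argument, and you should state explicitly that $(M_x)_x$ is a quenched martingale so that Doob's inequality applies.
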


\begin{pf}
Observe first
\[
\frac{\llvert\psi_{\omega,\mathbf{r}}(x)-A_\omega x\rrvert}{x^\alpha
+1}\leq i_\omega+\frac{ |\sum_{j=1}^x (r_j-A_\omega)
|}{x^\alpha+1}.
\]
Thus,
\[
\bigl(\log^+B(\psi)\bigr)^{2+\epsilon}\leq C \biggl(\bigl(C+\log^+
i_\omega\bigr)^{2+\epsilon
}+ \sup_{x\in\N} \biggl(
\log^+ \frac{ |\sum_{j=1}^x
(r_j-A_\omega)
|}{x^\alpha+1} \biggr)^{2+\epsilon} \biggr). %
\]
Let $y_j:=r_j-A_\omega$ be centered random variables. For a fixed
reproduction law~$\omega$, denote by $\P_\omega$ the quenched
probability. Since under $\P_\omega$ the variables $y_j$ are
independent, $\E_\omega(y_{j_1}y_{j_2}y_{j_3}y_{j_4})=0$ if there
exists an index $j_k$ that is different from all the others. Then
standard calculus shows that
\begin{eqnarray*}
\E_\omega\Biggl[\sum_{j=1}^x
y_j \Biggr]^4 &=& \sum_{j_1,j_2,j_3,j_4=1}^x
\E_\omega(y_{j_1}y_{j_2}y_{j_3}y_{j_4})
\\
&=& x \E_\omega\bigl(y_1^4\bigr)+3x(x-1)
\bigl(\E_\omega\bigl[y_1^2\bigr]
\bigr)^2\leq4x^2\E_\omega\bigl(y_1^4
\bigr).
\end{eqnarray*}
Finally, since $\alpha>3/4$, we have
\begin{eqnarray*}
&& \E\biggl(\sup_{x\in\N} \biggl(\log^+
\frac{ |\sum_{j=1}^x
(r_j-A_\omega
) |}{x^\alpha+1} \biggr)^{2+\epsilon} \biggr)
\\
&&\qquad \leq C \E\biggl(\sum
_{x\in\N} \biggl( \frac{ |\sum_{j=1}^x
y_j|}{x^\alpha+1} \biggr)^{4}
\biggr)
= C \E\biggl(\sum_{x\in\N} \frac{\E_\omega[ |\sum_{j=1}^x
y_j |^{4} ]}{(x^\alpha+1)^4}
\biggr)
\\
&&\qquad \leq C \E\biggl(\sum_{x\in\N} \frac{4x^2\E_\omega
(y_1^4)}{(x^\alpha
+1)^4}\biggr)=C\sum_{x\in\N} \frac{4x^2\E(y_1^4)}{(x^\alpha
+1)^4}<\infty.
\end{eqnarray*}\upqed
\end{pf}

\subsection{Reflected random walk in critical case}\label{sec6.6}
\label{secreflectedproof}
The reflected random walk
\[
Y_n^x = \bigl| Y_{n-1}^x -
u_n \bigr|,
\]
is an example of an asymptotic translation for which (\ref
{eqasympt-trans}) holds. Thus, we can apply our main Theorem~\ref{mthm}.
However, in this case the same results hold under weaker hypotheses and
a much more direct proof. We give here the proof of Theorem~\ref{mthm2}, stated in the \hyperref[sec1]{Introduction}.

\begin{pf*}{Proof of Theorem~\ref{mthm2}}
Define the upward ladder times of $S_n = \sum_{i=1}^n u_i$:%
%
\begin{eqnarray*}
t_0 &=& 0,
\\
t_{k+1} &=& \inf\{n>t_k\dvtx S_n \geq
S_{t_k}\}
\end{eqnarray*}
and put $\bar u_k = S_{t_{k}} - S_{t_{k-1}}$. Then $\{\bar u_k\}$ is a
sequence of i.i.d. random
variables and every $\bar u_k$ is equal in distribution to
$S_{t_1}$. We
define reflected random
walk for $\{\bar u_k\}$:
\begin{eqnarray*}
\overline Y_0^x &=& x,
\\
\overline Y^x_{k+1} &=& \bigl| \overline Y_k^x
- \bar u_{k+1} \bigr|,
\end{eqnarray*}
then $\overline Y^x_{k} = Y^x_{t_k}$.
In view of the result of Chow and Lai \cite{CL}, $\E({\bar u_k})^{{1}/2}<\infty$
and this is sufficient for existence of a unique invariant probability
measure $\nu_t$ of the process $\{\overline Y_k^x\}$ (see \cite{PW1}
for more
details).
Let us define the measure
\[
\nu_0(f) = \int_{\R_+}\E\Biggl[\sum
_{n=0}^{t-1} f\bigl(Y_n^x
\bigr) \Biggr]\nu_t(dx).
\]
Notice first that this is a Radon measure. Indeed, define ${l_i}=\inf\{
n> l_{i-1}\dvtx S_n < S_{l_{i-1}}\}$. Since $\E(u_1^-)^2<\infty$,
$-\infty<\E
S_{l} <0$ (see \cite{CL}). Take any $f\in C_C(\R_+)$, then
by the duality lemma \cite{F}
%
%
\begin{eqnarray}\label{eqduality}
\nu_0(f) &=& \int_{\R_+}\E\Biggl[
\sum_{n=0}^{t-1} f(x-S_n) \Biggr]
\nu_t(dx)
\nonumber\\[-8pt]\\[-8pt]
&=& \int_{\R_+}\E\Biggl[\sum
_{n=0}^{\infty} f(x-S_{l_n}) \Biggr]
\nu_t(dx).\nonumber
\end{eqnarray}
By the renewal theorem,
\begin{eqnarray*}
\E\Biggl[\sum_{n=0}^{\infty}
f(x-S_{l_n}) \Biggr] &\le& C \E[\#n\dvtx \alpha< x-S_{l_n} <
\beta] \le C|\beta-\alpha|,
\end{eqnarray*}
therefore, $\nu_0(f)$ is finite, and thus $\nu_0$ is a Radon measure.

Next, since $\mu_t *\nu_t = \nu_t$,
we have
\begin{eqnarray*}
\mu*\nu_0(f) &=& \int_{\R_+} \int
_{\R}\E\Biggl[\sum_{n=0}^{t-1}
f\bigl(Y_n^{|x-y|}\bigr) \Biggr]\mu(dy)\nu_t(dx)
\\
&=& \int_{\R_+} \E\Biggl[\sum_{n=1}^{t}
f\bigl(Y_{n}^x\bigr) \Biggr]\nu_t(dx)=
\nu_0(f).
\end{eqnarray*}
Therefore, $\nu_0$ is a $\mu$ invariant Radon measure, so $\nu_0 =
C\nu
$ and without any loss of generality we may assume $\nu=\nu_0$.

Finally, by (\ref{eqduality}), the Lebesgue theorem and the renewal theorem
\begin{eqnarray*}
\lim_{z\to\infty} \int_{\R_+}f(u-z)\nu(du) &=& \lim
_{z\to\infty} \int_{\R_+}\E\Biggl[\sum
_{n=0}^{\infty} f(x-S_{l_n}-z) \Biggr]
\nu_t(dx)
\\
&=& \frac{1}{-\E S_{l}}\int_{\R_+}f(x)\,dx
\end{eqnarray*}
and the theorem is proved.
\end{pf*}

\setcounter{equation}{0}
\appendix\label{appendix}
\section*{Appendix: Proofs of technical results}
In this appendix, we give the postponed proofs of the technical results
stated in Lemma~\ref{lemalphatozero},
Proposition~\ref{lemlim-poiss-eq}, Lemma~\ref{lemtranslation}. At
the end, we formulate and prove
Lemma~\ref{lemALinterval}, which is used in Section~\ref{secinterval}.

\begin{pf*}{Proof of Lemma~\ref{lemalphatozero}}
We will prove the result only for positive $x$, since for negative
values of $x$ the same argument is valid just by conjugating with the
map $x\mapsto-x$.

Suppose first $x\geq1$. We have
\begin{eqnarray*}
&& r \bigl(A_{\alpha}r^{-1}(x)-B_\alpha
\bigl(1+\bigl|r^{-1}(x)\bigr|^\alpha\bigr) \bigr)
\\
&&\qquad \leq
\psi_r(x)\leq r \bigl(A_{\alpha}r^{-1}(x)+B_\alpha
\bigl(1+\bigl|r^{-1}(x)\bigr|^\alpha\bigr) \bigr),
\\
&& r \bigl(A_{\alpha} x^{1/(1-\alpha)}-B_\alpha\bigl(1+
x^{\alpha/(1-\alpha)}\bigr) \bigr)
\\
&&\qquad  \leq\psi_r(x)\leq r
\bigl(A_{\alpha} x^{1/(1-\alpha)}+B_\alpha\bigl(1+ x^{\alpha/(1-\alpha)}
\bigr) \bigr),
\\
&& r \bigl(A_{\alpha} x^{1/(1-\alpha)}-B_\alpha c_\alpha
x^{\alpha/(1-\alpha)} \bigr)
\\
&&\qquad  \leq\psi_r(x)\leq r \bigl(A_{\alpha}
x^{1/(1-\alpha)}+B_\alpha c_\alpha x^{\alpha/(1-\alpha)}
\bigr),
\end{eqnarray*}
where $c_\alpha$ only depends on $\alpha$. Suppose further
$x>c_\alpha
B_\alpha/A_{\alpha}$, then $A_{\alpha} x^{1/(1-\alpha)}-B_\alpha
c_\alpha x^{\alpha/(1-\alpha)}>0$ and
\begin{eqnarray*}
&& \bigl(A_{\alpha} x^{1/(1-\alpha)}-B_\alpha c_\alpha
x^{\alpha/(1-\alpha)} \bigr)^{1-\alpha}
\\
&&\qquad  \leq\psi_r(x)\leq
\bigl(A_{\alpha} x^{1/(1-\alpha)}+B_\alpha c_\alpha
x^{\alpha/(1-\alpha)} \bigr)^{1-\alpha},
\\
&& A_{\alpha}^{1-\alpha} x^{(1-\alpha)/(1-\alpha)} - A_{\alpha
}^{-\alpha}
x^{-\alpha/(1-\alpha)} B_\alpha c_\alpha x^{\alpha/(1-\alpha)}
\\
&&\qquad  \leq
\psi_r(x)\leq A_{\alpha}^{1-\alpha} x^{(1-\alpha)/(1-\alpha)}
+(1-\alpha) A_{\alpha}^{-\alpha} x^{-\alpha/(1-\alpha)} B_\alpha
c_\alpha x^{\alpha/(1-\alpha)},
\\
&& A_{\alpha}^{1-\alpha} x - A_{\alpha}^{-\alpha}
B_\alpha c_\alpha
\\
&&\qquad  \leq\psi_r(x)\leq
A_{\alpha}^{1-\alpha} x +(1-\alpha) A_{\alpha
}^{-\alpha}
B_\alpha c_\alpha
\end{eqnarray*}
since for $x_0>0$ and $h>0$, by concavity $ (x_0+h)^{1-\alpha}\leq
x_0^{1-\alpha}+(1-\alpha)x_0^{-\alpha}h$ and $(x_0-h)^{1-\alpha
}\geq
x_0^{1-\alpha}-x_0^{-\alpha}h$. Hence, we proved the lemma for
$x> \max\{
1,\break c_\alpha B_\alpha/A_\alpha\}$
Now, for $x<1$
\begin{eqnarray*}
&& r \bigl(A_{\alpha} x^{1/(1-\alpha)}-B_\alpha\bigl(1+
x^{\alpha/(1-\alpha)}\bigr) \bigr)
\\
&&\qquad  \leq\psi_r(x)\leq r
\bigl(A_{\alpha} x^{1/(1-\alpha)}+B_\alpha\bigl(1+ x^{\alpha/(1-\alpha)}
\bigr) \bigr),
\\
&& - (2 B_\alpha)^{1-\alpha}
\\
&&\qquad  \leq \psi_r(x)\leq
(A_{\alpha}+ 2B_\alpha)^{1-\alpha}
\end{eqnarray*}
and for $ x \leq c_\alpha B_\alpha/A_{\alpha}$.
%
\begin{eqnarray*}
&& - \biggl(B_\alpha\biggl(1+ \biggl(\frac{c_\alpha B_\alpha}{A_{\alpha
}}
\biggr)^{\alpha/(1-\alpha)} \biggr) \biggr)^{1-\alpha}
\\
&&\qquad \leq\psi_r(x)
\leq\biggl(A_{\alpha} \biggl(\frac{c_\alpha B_\alpha
}{A_{\alpha
}} \biggr)^{1/(1-\alpha)}+B_\alpha
\biggl(1+ \biggl(\frac
{c_\alpha
B_\alpha}{A_{\alpha}} \biggr)^{\alpha/(1-\alpha)} \biggr)
\biggr)^{\alpha/(1-\alpha)}.
\end{eqnarray*}
%
Hence, the lemma follows.
\end{pf*}

\begin{pf*}{Proof of Proposition~\ref{lemlim-poiss-eq}}
\textit{Step} 1. Let $t_k$ and $l_k$ be the stopping times defined in
(\ref{stopping}).
%
Let $U_l$ be the potential of the random walk $S_{l_k}$ and let
\[
R(x):= \sum_{k=0}^\infty\E\bigl[ g(x+
S_{l_k}) \bigr]=U_l(\delta_x *_\R
g).
\]
Since the function $g$ is directly Riemann integrable and $-\infty<\E
S_{l}<0$, the function $R$ is well defined and finite for every $x$.
Notice also that by the duality lemma~\cite{F}
%
%
\begin{equation}
\label{eqdef-R} R(x) = \sum_{k=0}^\infty\E
\bigl[ g(x+ S_{l_k}) \bigr] = \E\Biggl[ \sum
_{k=0}^{t-1} g(x+ S_k) \Biggr].
\end{equation}

\textit{Step} 2. We claim that
%
%
\begin{equation}
\label{eqf-R} \E\bigl[ f(x+ S_t) \bigr]-f(x) = \E\Biggl[ \sum
_{k=0}^{t-1} g(x+ S_k)
\Biggr]=R(x).
\end{equation}
Indeed, the process $ f(x+S_n) - \sum_{k=0}^{n-1} g(x+S_k)$ forms a
martingale [for this purpose one just has to iterate the Poisson
equation (\ref{poissonequation})]. Then for any fixed $n$, $T\wedge n$
is a bounded stopping time, therefore, by the optional stopping time
theorem we have
\[
f(x) = \E\bigl[ f(x+ S_{t\wedge n}) \bigr] - \E\Biggl[ \sum
_{k=0}^{(t\wedge n)-1} g(x+ S_k) \Biggr].
\]
To justify that, we can let $n$ tend to infinity and change the order
of the limit
and the expected value to obtain (\ref{eqf-R}) observe that
\[
\E\bigl[ f(x+S_{t\wedge n} )\bigr] \le C\E\bigl[1+ (x+S_{t\wedge
n})^+
\bigr]\le C\E\bigl[1+ (x+S_{t})^+ \bigr] <\infty.
\]
The second term is uniformly dominated in $n$ by
\[
\E\Biggl[ \sum_{k=0}^{t-1} |g|(x+
S_k) \Biggr] = \sum_{k=0}^\infty
\E\bigl[ |g|(x+ S_{l_k}) \bigr]<\infty, %
\]
therefore converges to $R(x)$ when $n$ goes to infinity.

This proves that
\[
\E\bigl[ f(x+ S_t) \bigr]-f(x) = R(x) = U_l(
\delta_x*_\R g)
\]
and by the renewal theorem we obtain (\ref{eqlim-poiss-eq-non-cent}).

\textit{Step} 3. Let
\[
G(x):=\int_{-\infty}^x g (z)\,dz.
\]
If we suppose $\int g(x) \,dx=0$ then
\[
G(x) =\int^{+\infty}_{-\infty} g(z)\,dz-\int
^{\infty}_x g(z)\,dz=-\int^{+\infty}_x
g(z)\,dz. %
\]
Thus,
\[
\bigl|G(x)\bigr| \le\int_{-\infty}^x \bigl|g (z)\bigr|\,dz
\mathbf{1}_{(-\infty,0]}(x)+\int^{\infty
}_x \bigl|g(z)\bigr|\,dz
\mathbf{1}_{[0,+\infty)}(x) =: G_1(x)+G_2(x)
\]
and $G$ is directly Riemann integrable since functions $G_i$ are
monotone and integrable on their support:
\begin{eqnarray*}
\int_{-\infty}^0G_1(x) \,dx
&=& \int_{-\infty}^{+\infty} \int_{-\infty
}^{+\infty}
\mathbf{1}_{[z<x<0]} \bigl|g (z)\bigr|\,dx \,dz =\int_{-\infty}^{0}
\bigl|z g (z)\bigr| \,dz <\infty,
\\
\int^{+\infty}_0G_2(x) \,dx &=& \int
_{-\infty}^{+\infty} \int_{-\infty
}^{+\infty}
\mathbf{1}_{[z>x>0]} \bigl|g (z)\bigr|\,dx \,dz =\int^{+\infty}_{0}
\bigl|z g (z)\bigr| \,dz <\infty.
\end{eqnarray*}
Furthermore,
\begin{eqnarray*}
\int_{-\infty}^\infty G(x) \,dx &=& \int
_{-\infty}^{+\infty} \int_{-\infty
}^{+\infty}
\mathbf{1}_{[z<x<0]} g (z)\,dx \,dz
\\
&&{} - \int_{-\infty
}^{+\infty}
\int_{-\infty}^{+\infty} \mathbf{1}_{[z>x>0]} g (z)\,dx
\,dz
\\
&= &-\int^{+\infty}_{-\infty} z g (z)\,dz.
\end{eqnarray*}

\textit{Step} 4.
By the renewal theory, $U_l(\delta_x *_\R G)$ is well defined and by
Fubini's theorem
\begin{eqnarray*}
\int_{-\infty}^x R(z)\,dz &=& \int_{-\infty}^0
\int_{-\infty}^x g(z+u)\,dzU_l(du)
\\
&=&\int
_{-\infty}^0\int_{-\infty}^{x+u}
g(z)\,dzU_l(du)
\\
&=& U_l(\delta_x *_\R G).
\end{eqnarray*}
On the other hand,
\[
\int_{-\infty}^x R(z)\,dz = \E\biggl[ \int
_{-\infty}^x f(z+ S_t)\,dz - \int
_{-\infty}^x f(z)\,dz \biggr]=\E\biggl[ \int
_x^{x+S_t} f(z)\,dz \biggr].
\]
In fact, the two integrals above are finite because by our hypotheses
\begin{eqnarray*}
\int_{-\infty}^x \E\bigl[f(y+S_t)
\bigr]\,dy &=& \E\biggl[ \int_{-\infty}^{x+S_t} f(y)\,dy
\biggr]\leq C\E\bigl[1+(x+S_t)^+\bigr]
\end{eqnarray*}
and $\E S_t<\infty$ since $\bar{\mu}$ has moment of order
$2+\epsilon
$, see \cite{CL}. Thus, we proved
\[
\E\biggl[ \int_x^{x+S_t} f(z)\,dz \biggr] =
\delta_x *_\R U_{l}(G)
\]
and we can conclude using again the renewal theorem.
\end{pf*}

\begin{pf*}{Proof of Lemma~\ref{lemtranslation}}
\textit{Step} 1. Let $0<\gamma<1$, then the set of $v>0$ such that the
function $u\mapsto(u-v)^{-\gamma}$ is $\nu$-integrable on
$(v,+\infty
)$ is of full Lebesgue measure. In fact, for any interval $[a,b]\subset
(0,\infty)$,
\begin{eqnarray*}
&& \int_a^b \biggl(\int
_v^\infty(u-v)^{-\gamma}\nu(du) \biggr)\,dv
\\
&&\qquad =  \int_a^{2b} \biggl(\int_a^{u\wedge b}
(u-v)^{-\gamma} \,dv \biggr)\nu(du)+ \int_{2b}^\infty
\biggl(\int_a^{u\wedge b} (u-v)^{-\gamma} \,dv
\biggr)\nu(du)
\\
&&\qquad \leq \int_a^{2b} \biggl(\int
_{0} ^{2b-a} w^{-\gamma} \,dw \biggr)\nu(du)+\int
_{2b} ^\infty\biggl(\int_{u-b}
^{u-a} w^{-\gamma} \,dw \biggr)\nu(du)
\\
&&\qquad = C + \int_{2b}^\infty(u-b)^{-\gamma} (b-a)
\nu(du)<\infty.
\end{eqnarray*}
Take $w_0$, such that $\int_{w_0}^\infty(u-w_0)^{-\gamma}\nu
(du)<\infty
$, then
\[
f_\phi(x)=\int_{w_0}^\infty\phi
\bigl(e^{-x}(u-w_0)\bigr) \nu(du)\leq C \int
_{w_0}^\infty e^{\gamma x}(u-w_0)^{-\gamma}
\nu(du)\leq C e^{\gamma x},
\]
this gives good estimates of $f_{\phi}$ for negative $x$'s.

\textit{Step} 2.
Let $\operatorname{supp}(\phi)\subset[m,M]$. By Proposition~\ref{propboundnu}, the tail of $\nu$ is at most logarithmic, therefore for
$x\geq0$,
\[
f_\phi(x)\leq\nu\bigl(\bigl[e^xm+w_0,
e^xM+w_0\bigr]\bigr)\leq\nu\bigl(\bigl[e^xm,
e^x(M+w_0)\bigr]\bigr)\leq C (1+ x)
\]
and
\begin{eqnarray*}
\int_{-\infty}^x f_\phi(y) \,dy
&\leq& C \int_\mathbb{R}\int_{-\infty
}^\infty
\mathbf{1}_{[y<x]} \mathbf{1}_{[m,M]}\bigl({\mathrm{e}}^{-y}(u-w_0)
\bigr) \,dy \nu(du)
\\
&\leq& C \int_\mathbb{R}\mathbf{1}_{[w_0< u\leq{\mathrm{e}}^x
(M+w_0)]}\log
\frac
{M}{m}\nu(du)\leq C\bigl(1+ x^+\bigr).
\end{eqnarray*}
This proves (\ref{eqcondf}).

\textit{Step} 3.
We need to justify that $g_\phi=\bar\mu* f_\phi-f_\phi$ is directly
Riemann integrable, and moreover $\int_\R|x g(x)| \,dx < \infty$. We
recall first that, since $g$ is continuous, to prove that it is
directly Riemann integrable is sufficient to show that $|g|$ is
dominated on $(-\infty,0]$ (resp., on $[0,+\infty)$) by an integrable
nondecreasing (resp., nonincreasing) function. For $x<0$,
\begin{eqnarray*}
&& \bar\mu* f_\phi(x)
\\
&&\qquad = \int_{-\infty}^{+\infty}
f_\phi(x+y)\bar\mu(dy)
\\
&&\qquad = \int_{-\infty}^{-x/2} C {\mathrm{e}}^{\gamma(x+y)}
\bar\mu(dy)+ \int_{-x/2}^{+\infty} K\bigl(1+(x +y)^+
\bigr)\bar\mu(dy)
\\
&&\qquad \leq C {\mathrm{e}}^{\gamma(x/2)} + \int_{-x/2}^\infty
K\bigl(1+ |y|\bigr)\bar\mu(dy)
\\
&&\qquad = C {\mathrm{e}} ^{\gamma(x/2)} + \frac
{1}{|x|^{2+\varepsilon}}
\int_{-x/2}^\infty K\bigl(1+ |y|\bigr)|y|^{1+\varepsilon} \bar\mu(dy)
\\
&&\qquad \leq \frac{C}{1+|x|^{1+\varepsilon}},
\end{eqnarray*}
since $\bar\mu$ has a moment of order $2+\varepsilon$. Thus,
$g_\phi
\mathbf{1}_{(- \infty,0]}$ is directly Riemann integrable. Furthermore,
\begin{eqnarray*}
&& \int_{-\infty}^0|x| \bar\mu*f_\phi(x) \,dx
\\
&&\qquad = \int_{-\infty
}^{+\infty} \int
_{-\infty}^0|x| f_\phi(x+y) \,dx \bar\mu(dy)
\\
&&\qquad = \int_{-\infty}^{+\infty} \int_{-\infty}^y|x-y|
f_\phi(x) \,dx \bar\mu(dy)
\\
&&\qquad = \int_{-\infty}^{+\infty} \biggl(\int
_{-\infty}^0|x-y| f_\phi(x) \,dx + \int
_{0}^{y^+}|x-y| f_\phi(x) \,dx \biggr)
\bar\mu(dy)
\\
&&\qquad \leq \int_{-\infty}^{+\infty} \biggl( C \int
_{-\infty}^0|x-y| {\mathrm{e}} ^{\gamma x} \,dx +
2|y| \int_{0}^{y^+} f_\phi(x) \,dx \biggr)
\bar\mu(dy)
\\
&&\qquad \leq C \int_{-\infty}^{+\infty} \bigl(1+|y|+|y|^2
\bigr) \bar\mu(dy)<\infty.
\end{eqnarray*}

\textit{Step} 4. To check that $g_\phi$ is directly Riemann integrable
and $|x g_\phi(x)|$ is integrable for positive $x$, we show that
\[
\sum_{n=0}^{\infty}\sup_{n\leq x <n+1}
\bigl|x g_\phi(x)\bigr| <\infty.
\]
Applying $\mu_0$ invariance of $\nu_0$ and since $A(\psi_0)=A(\psi)$,
we obtain
\[
\bigl|g(x)\bigr|= \biggl|\int\!\!\int\phi\bigl(e^{-x}\bigl(A(\psi) u\bigr) \bigr) - \phi
\bigl(e^{-x}\bigl(\psi(u)\bigr) \bigr)\nu_0(du)
\mu_0(d\psi) \biggr|.
\]
The function $\tilde\phi(x)=\phi({\mathrm{e}}^x)$ is a
Lipschitz on $\R$, hence
\begin{eqnarray*}
&& \bigl|\phi\bigl(e^{-x}\bigl(A(\psi) u\bigr) \bigr) - \phi
\bigl(e^{-x}\psi(u) \bigr) \bigr|
\\
&&\qquad \leq  \min\biggl\{C \biggl|\log
\frac{\psi(u)}{A(\psi)u} \biggr|,2\| \phi\| _\infty\biggr\}
\\
&&\qquad \leq  \min\biggl\{C \biggl|\frac{\psi(u)}{A(\psi) u}-1 \biggr|,2\|\phi\| _\infty
\biggr\}
\\
&&\qquad \leq  \min\biggl\{C \biggl|\frac{B(\psi)}{A(\psi) u} \biggr|,2\|\phi\| _\infty
\biggr\}
=: \rho\biggl(\frac{A u}{B}\biggr),
\end{eqnarray*}
where we use the convention that $\log z=-\infty$ for $z\leq0$ and
$\rho(y):=  \min\{C |\frac{1}{y} |,\break 2\|\phi\|_\infty\}$.
Take now $0\leq n\leq x <n+1$
%
\begin{eqnarray*}
\hspace*{-6pt}&& |x| \bigl|\phi\bigl(e^{-x}(A u) \bigr) - \phi\bigl(e^{-x}
\psi(u) \bigr) \bigr|
\\
\hspace*{-6pt}&&\!\!\qquad \leq\log^+\frac{A u+B}{m}
\\
\hspace*{-6pt}&&\!\!\!\quad\qquad{}\times \rho\biggl(\frac{A u}{B}\biggr) (
\mathbf{1}_{[\log (\psi(u)/(M\mathrm{e}))\leq n\leq
\log(\psi(u)/m)]}+\mathbf{1}_{[\log((A u)/(M\mathrm{e}))\leq n\leq\log((Au)/m)]} ).
\end{eqnarray*}
%
Thus,
\begin{eqnarray*}
\hspace*{-2pt}&& \sum_{n=0}^{\infty}\sup
_{n\leq x <n+1} \bigl|x g_\phi(x)\bigr|
\\
\hspace*{-2pt}&&\qquad \leq  \int\!\!\int\sum
_{n=0}^{\infty} \sup_{n\leq x <n+1}|x| \bigl|\phi
\bigl(e^{-x}(A u) \bigr) - \phi\bigl(e^{-x}\psi(u) \bigr) \bigr|
\nu_0(du)\mu_0(d\psi)
\\
\hspace*{-2pt}&&\qquad \leq \int\!\!\int\log^+\frac{A u+B}{m}\cdot\rho\biggl(\frac{A u}{B}
\biggr) 2\log\frac{{\mathrm{e}}M}{m} \nu_0(du)\mu_0(d\psi)
\\
\hspace*{-2pt}&&\qquad \leq 2\log\frac{{\mathrm{e}}M}{m}
\\
\hspace*{-2pt}&&\quad\qquad{}\times  \int\biggl(\int\biggl(\log^+
\frac{1}m+\log^+ B + \log^+\biggl(\frac{A
u}{B}+1\biggr) \biggr)
\rho\biggl(\frac{A u}{B}\biggr) \nu_0(du) \biggr)
\mu_0(d\psi).
\end{eqnarray*}
To estimate the last expression, we use the fact that
there exists a constant $C$ such
that for all nonincreasing functions $h\dvtx[0,+\infty)\to[0,+\infty)$
and all $M>0$
%
%
\begin{eqnarray}\label{temp}
&& \int_\R h \bigl({|u|}/M \bigr)\nu_0(du)
\nonumber\\[-8pt]\\[-8pt]
&&\qquad \leq C\bigl(1+\log^+M\bigr) \biggl(\|h\| _\infty+\int
_1^{+\infty}h(z) \bigl(1+\log(z)\bigr)
\frac{dz}{z} \biggr).\nonumber
\end{eqnarray}
Before we prove the last inequality, let us check how it implies the lemma.
Since $\log^+(z+1)\rho(z)\leq C/(1+z)^{1/2}$ for $z>0$, by (\ref
{temp}), we have
\begin{eqnarray*}
&& \int\biggl(\log^+\frac{1}m+\log^+ B + \log^+\biggl(
\frac{A u}{B}+1\biggr) \biggr) \rho\biggl(\frac{A u}{B}\biggr)
\nu_0(du)
\\
&&\qquad \leq C \bigl(1+\log^+(B/A)\bigr)
\\
&&\quad\qquad{}\times  \biggl(\bigl(1+\log^+B\bigr)
\\
&&\hspace*{51pt}{} + \int
_{1}^{+\infty} \biggl(\bigl(1+ \log^+ B\bigr)\rho(z) +
\frac{1}{(1+z)^{1/2}} \biggr) \bigl(1+\log^+(z)\bigr) \frac
{dz}{z} \biggr)
\\
&&\qquad \leq C \bigl(1+\bigl(\log^+B\bigr)^2+\log^+ B\log^+A \bigr).
\end{eqnarray*}
The last expression is $\mu_0$-integrable and we conclude.

Finally, to prove (\ref{temp}) we write
\begin{eqnarray*}
&& \int_\R h \bigl({|u|}/M \bigr)\nu_0(du)
\\
&&\qquad \leq \|h\|_\infty\nu_0\bigl([-M{\mathrm{e}},M{\mathrm{e}}]
\bigr)+\int\mathbf{1}_{[|u|>{\mathrm{e}}M]} h \bigl({|u|}/M \bigr)\nu_0(du)
\\
&&\qquad  \leq C\bigl(1+\log^+M\bigr)\|h\|_\infty+ \sum
_{n=1}^\infty\int\mathbf{1}_{[{\mathrm{e}}
^{n+1}M\geq|u|>{\mathrm{e}}^{n} M]}
\nu_0(du)h\bigl(e^n\bigr)
\\
&&\qquad \leq C\bigl(1+\log^+M\bigr)\|h\|_\infty+ \sum
_{n=1}^\infty\bigl(\log^+\bigl({\mathrm{e}}
^{n+1}M\bigr)+ 1 \bigr)h\bigl(e^n\bigr)
\\
&&\qquad \leq C\bigl(1+\log^+M\bigr)\|h\|_\infty+ \sum
_{n=1}^\infty\int_{{\mathrm
{e}}^{n-1}}^{{\mathrm{e}}
^{n}}\bigl(\log^+\bigl(z{\mathrm{e}}^2M\bigr)+ 1 \bigr)h(z)
\frac{dz}{z}
\\
&&\qquad  \leq C\bigl(1+\log^+M\bigr)\|h\|_\infty+ \int_{1}^{\infty}
\bigl(\log^+(z)+\log^+M + 3 \bigr)h(z)\frac{dz}{z}.
\end{eqnarray*}\upqed
\end{pf*}

%
\begin{lem}\label{lemALinterval}
Let $\phi\in C([0,1])$ be a function fixing $0$ and $1$, derivable at
0 and 1 and such that $\phi'(0)=\phi'(1)=:a_{\phi}$. Suppose
\begin{eqnarray*}
\beta^0_1 &=& \inf_{u\in[0,1/2]}
\bigl(1- \phi(u)\bigr)>0,\qquad\beta^0_2 = \inf
_{u \in[0,1/2]}\frac{ \phi(u)}{u}>0,
\\[-1pt]
\beta^0_3 &=& \sup_{u\in[0,1/2]} \biggl\llvert\frac{\phi(u)-a_\phi
u}{u^2}\biggr\rrvert<
\infty,
\\[-1pt]
\beta^1_1 &=& \inf_{u\in[1/2,1]} \phi(u)>0,\qquad
\beta^1_2 = \inf_{u \in[1/2,1]}
\frac{1-\phi(u)}{1-u}>0,
\\[-1pt]
\beta^1_3 &=& \sup_{u\in[1/2,1]} \biggl\llvert\frac{\phi(u)-1-a_\phi
(u-1)}{(u-1)^2}\biggr\rrvert<\infty.
\end{eqnarray*}
Consider the diffeomorphism of $(0,1)$ on $\R$
\[
r(u)=-\frac{1}{u}+\frac{1}{1-u}.
\]
Then $\Psi_\phi=r \circ\phi\circ r^{-1}$ satisfy (\ref{AL}) for $A(\Psi
_\phi
)=1/{a_{\phi}}$ and
\[
B(\Psi_\phi)< C_r \biggl(\frac{(1+a_\phi+\beta_3^0)}{a_{\phi}
\beta^0_2} +
\frac{1}{\beta^0_1} + \frac{(1+a_\phi+\beta_3^1)}{a_{\phi}
\beta^1_2} + \frac{1}{\beta^1_1} \biggr), %
\]
where $C_r$ depends only on the function $r$.
\end{lem}

\begin{pf} Since the function $r$ satisfies $r(u)= -r(1-u)$ and our
assumptions on $\phi$ near 0 and 1 are symmetric, it is sufficient to
prove the condition (\ref{AL}) only for negative $x$.
Since $\beta^0_3<\infty$, by the Taylor expansion we have
%
%
\begin{equation}
\label{diffphi} \phi(u) = au+\epsilon_\phi(u)\qquad\mbox{with }\bigl|
\epsilon_\phi(u)\bigr|\leq\beta_3^0
u^2\mbox{ for $u\leq1/2$}.
\end{equation}
Moreover, simple calculus shows that
%
%
\begin{equation}
\label{diffr} r^{-1}(x) = -\frac{1} x +
\epsilon_{r^{-1}}(x)\qquad\mbox{with } \epsilon_{r^{-1}}(x)= \biggl(
\frac{1}{x^2} \biggr)\mbox{ for $x\to-\infty$}.
\end{equation}
For $x<0$, we write
\[
\biggl| \frac{x}{a_{\phi}} - \Psi_{\phi}(x) \biggr| = \biggl| \frac{x}{a_{\phi}} - r
\bigl( \phi\bigl( r^{-1}(x) \bigr) \bigr) \biggr| \le\biggl| \frac{x}{a_{\phi}} +
\frac{1}{\phi( r^{-1}(x) )} \biggr| + \frac{1}{1- \phi( r^{-1}(x) )}. %
\]
Notice that for $x<0$, $r^{-1}(x)\in(0,1/2)$, therefore, the second
factor can be bounded by $\frac{1}{\beta^0_1}$. So, we need just to
estimate the first term. We write
\[
I(x) = \biggl| \frac{x}{a_{\phi}} - \frac{1}{\phi( r^{-1}(x) )} \biggr| = \frac
{|\phi(r^{-1}(x))x -{a_{\phi}}|}{|{a_{\phi}} \cdot
\phi
(r^{-1}(x))|}.
\]
Take $M = -r(1/10)$, then for $x\in[-M,0]$ we have $\phi
(r^{-1}(x))\geq
\beta^0_2 r^{-1}(x)\geq\beta^0_2/10$ and hence
\[
I(x) \le10 \frac{M+a_{\phi}}{a_{\phi} \beta^0_2}. %
\]
Now we consider $x<-M$. Since there exists $\eta$ such that $x
r^{-1}(x)>\eta$, by (\ref{diffphi}) and (\ref{diffr}), we have
\begin{eqnarray*}
I(x) &=& \frac{ |\phi(r^{-1}(x))x + {a_{\phi}} |\cdot|x| }{
a_{\phi} \cdot (\phi(r^{-1}(x)))/(r^{-1}(x)) \cdot|x r^{-1}(x)|}
\\
&\le&
\frac{1}{a_{\phi} \beta^0_2 \eta} \cdot\bigl|\phi
\bigl(r^{-1}(x)\bigr)x+ {a_{\phi}} \bigr| |x|
\\
&=& \frac{1}{a_{\phi} \beta^0_2 \eta} \bigl| a_\phi r^{-1}(x) x + \varepsilon
_{\phi}\bigl( r^{-1}(x)\bigr) x + a_{\phi} \bigr||x|
\\
&=&
\frac{1}{a_{\phi} \beta^0_2 \eta} \bigl| a_{\phi}\varepsilon_{r^{-1}}(x) x +
\varepsilon_{\phi} \bigl( r^{-1}(x) \bigr) x \bigr|
\\
&\le& \frac{ |a_{\phi} \varepsilon_{r^{-1}}(x) x | +
\beta
_3^0
| ( r^{-1}(x) )^2 x |} {a_{\phi} \beta^0_2 \eta} \leq\frac{C_r(a_\phi
+\beta_3^0)}{a_{\phi} \beta^0_2}.
\end{eqnarray*}\upqed
\end{pf}




%

\printaddresses


\begin{thebibliography}{21}

\bibitem{Ald89}
%
\begin{bbook}[mr]
\bauthor{\bsnm{Aldous},~\bfnm{David}\binits{D.}}
(\byear{1989}).
\btitle{Probability Approximations Via the {P}oisson Clumping Heuristic}.
\bseries{Applied Mathematical Sciences}
\bvolume{77}.
\bpublisher{Springer},
\blocation{New York}.
\bid{mr={0969362}}
\end{bbook}
%
\bptok{imsref}%
\endbibitem

\bibitem{BBE}
%
\begin{barticle}[mr]
\bauthor{\bsnm{Babillot},~\bfnm{Martine}\binits{M.}},
\bauthor{\bsnm{Bougerol},~\bfnm{Philippe}\binits{P.}} \AND
\bauthor{\bsnm{\'Elie},~\bfnm{Laure}\binits{L.}}
(\byear{1997}).
\btitle{The random difference equation {$X_ n=A_ nX_ {n-1}+B_ n$} in
the critical case}.
\bjournal{Ann. Probab.}
\bvolume{25}
\bpages{478--493}.
\bid{doi={10.1214/aop/1024404297}, issn={0091-1798}, mr={1428518}}
\end{barticle}
%
\bptok{imsref}%
\endbibitem

\bibitem{B}
%
\begin{bmisc}[auto:STB|2014/02/12|14:17:21]
\bauthor{\bsnm{Benda},~\bfnm{M.}\binits{M.}}
(\byear{1998}).
\bhowpublished{Schwach kontraktive dynamische Systeme.
Ph.D. thesis, Ludwig-Maximilans-Univ. M\"unchen.}
\end{bmisc}
%
\bptok{imsref}%
\endbibitem

\bibitem{BBD}
%
\begin{barticle}[mr]
\bauthor{\bsnm{Brofferio},~\bfnm{Sara}\binits{S.}},
\bauthor{\bsnm{Buraczewski},~\bfnm{Dariusz}\binits{D.}} \AND
\bauthor{\bsnm{Damek},~\bfnm{Ewa}\binits{E.}}
(\byear{2012}).
\btitle{On the invariant measure of the random difference equation
{$X_ n=A_ n X_ {n-1}+B_ n$} in the critical case}.
\bjournal{Ann. Inst. Henri Poincar\'e Probab. Stat.}
\bvolume{48}
\bpages{377--395}.
\bid{doi={10.1214/10-AIHP406}, issn={0246-0203}, mr={2954260}}
\end{barticle}
%
\bptok{imsref}%
\endbibitem

\bibitem{BBD2}
%
\begin{bincollection}[mr]
\bauthor{\bsnm{Brofferio},~\bfnm{S.}\binits{S.}},
\bauthor{\bsnm{Buraczewski},~\bfnm{D.}\binits{D.}} \AND
\bauthor{\bsnm{Damek},~\bfnm{E.}\binits{E.}}
(\byear{2013}).
\btitle{On solutions of the affine recursion and the smoothing
transform in the critical case}.
In \bbooktitle{Random Matrices and Iterated Random Functions}.
\bseries{Springer Proceedings in Mathematics \& Statistics}
\bvolume{53}
\bpages{137--157}.
\bpublisher{Springer}, \blocation{Berlin}.
\bid{mr={3095197}}
\end{bincollection}
%
\bptok{imsref}%
\endbibitem

\bibitem{Bu}
%
\begin{barticle}[mr]
\bauthor{\bsnm{Buraczewski},~\bfnm{Dariusz}\binits{D.}}
(\byear{2007}).
\btitle{On invariant measures of stochastic recursions in a critical case}.
\bjournal{Ann. Appl. Probab.}
\bvolume{17}
\bpages{1245--1272}.
\bid{doi={10.1214/105051607000000140}, issn={1050-5164}, mr={2344306}}
\end{barticle}
%
\bptok{imsref}%
\endbibitem

\bibitem{CL}
%
\begin{barticle}[mr]
\bauthor{\bsnm{Chow},~\bfnm{Y.~S.}\binits{Y.~S.}} \AND
\bauthor{\bsnm{Lai},~\bfnm{T.~L.}\binits{T.~L.}}
(\byear{1979}).
\btitle{Moments of ladder variables for driftless random walks}.
\bjournal{Z. Wahrsch. Verw. Gebiete}
\bvolume{48}
\bpages{253--257}.
\bid{doi={10.1007/BF00537523}, issn={0044-3719}, mr={0537671}}
\end{barticle}
%
\bptok{imsref}%
\endbibitem

\bibitem{DKNP}
\begin{barticle}[mr]
\bauthor{\bsnm{Deroin},~\bfnm{B.}\binits{B.}},
\bauthor{\bsnm{Kleptsyn},~\bfnm{V.}\binits{V.}},
\bauthor{\bsnm{Navas},~\bfnm{A.}\binits{A.}} \AND
\bauthor{\bsnm{Parwani},~\bfnm{K.}\binits{K.}}
(\byear{2013}).
\btitle{Symmetric random walks on {${\rm Homeo}\sp +(\mathbb{R})$}}.
\bjournal{Ann. Probab.}
\bvolume{41}
\bpages{2066--2089}.
\bid{doi={10.1214/12-AOP784}, issn={0091-1798}, mr={3098067}}
\end{barticle}
\bptok{imsref}%
\endbibitem

\bibitem{DF}
%
\begin{barticle}[mr]
\bauthor{\bsnm{Diaconis},~\bfnm{Persi}\binits{P.}} \AND
\bauthor{\bsnm{Freedman},~\bfnm{David}\binits{D.}}
(\byear{1999}).
\btitle{Iterated random functions}.
\bjournal{SIAM Rev.}
\bvolume{41}
\bpages{45--76}.
\bid{doi={10.1137/S0036144598338446}, issn={0036-1445}, mr={1669737}}
\end{barticle}
%
\bptok{imsref}%
\endbibitem

\bibitem{DL}
%
\begin{barticle}[mr]
\bauthor{\bsnm{Durrett},~\bfnm{Richard}\binits{R.}} \AND
\bauthor{\bsnm{Liggett},~\bfnm{Thomas~M.}\binits{T.~M.}}
(\byear{1983}).
\btitle{Fixed points of the smoothing transformation}.
\bjournal{Z. Wahrsch. Verw. Gebiete}
\bvolume{64}
\bpages{275--301}.
\bid{doi={10.1007/BF00532962}, issn={0044-3719}, mr={0716487}}
\end{barticle}
%
\bptok{imsref}%
\endbibitem

\bibitem{E}
%
\begin{barticle}[mr]
\bauthor{\bsnm{{\'E}lie},~\bfnm{Laure}\binits{L.}}
(\byear{1982}).
\btitle{Comportement asymptotique du noyau potentiel sur les groupes
de {L}ie}.
\bjournal{Ann. Sci. \'Ecole Norm. Sup. (4)}
\bvolume{15}
\bpages{257--364}.
\bid{issn={0012-9593}, mr={0683637}}
\end{barticle}
%
\bptok{imsref}%
\endbibitem

\bibitem{F}
%
\begin{bbook}[mr]
\bauthor{\bsnm{Feller},~\bfnm{William}\binits{W.}}
(\byear{1966}).
\btitle{An Introduction to Probability Theory and Its Applications.
{V}ol. {II}}.
\bpublisher{Wiley},
\blocation{New York}.
\bid{mr={0210154}}
\end{bbook}
%
\bptok{imsref}%
\endbibitem

\bibitem{Go}
%
\begin{barticle}[mr]
\bauthor{\bsnm{Goldie},~\bfnm{Charles~M.}\binits{C.~M.}}
(\byear{1991}).
\btitle{Implicit renewal theory and tails of solutions of random equations}.
\bjournal{Ann. Appl. Probab.}
\bvolume{1}
\bpages{126--166}.
\bid{issn={1050-5164}, mr={1097468}}
\end{barticle}
%
\bptok{imsref}%
\endbibitem

\bibitem{Grin}
%
\begin{barticle}[mr]
\bauthor{\bsnm{Grincevi{\v{c}}jus},~\bfnm{A.~K.}\binits{A.~K.}}
(\byear{1974}).
\btitle{A central limit theorem for the group of linear
transformations of the line}.
\bjournal{Dokl. Akad. Nauk SSSR}
\bvolume{219}
\bpages{23--26}.
\bid{issn={0002-3264}, mr={0397820}}
\end{barticle}
%
\bptok{imsref}%
\endbibitem

\bibitem{Kesten}
%
\begin{barticle}[mr]
\bauthor{\bsnm{Kesten},~\bfnm{Harry}\binits{H.}}
(\byear{1973}).
\btitle{Random difference equations and renewal theory for products of
random matrices}.
\bjournal{Acta Math.}
\bvolume{131}
\bpages{207--248}.
\bid{issn={0001-5962}, mr={0440724}}
\end{barticle}
%
\bptok{imsref}%
\endbibitem

\bibitem{K}
%
\begin{barticle}[mr]
\bauthor{\bsnm{Kolesko},~\bfnm{Konrad}\binits{K.}}
(\byear{2013}).
\btitle{Tail homogeneity of invariant measures of multidimensional
stochastic recursions in a critical case}.
\bjournal{Probab. Theory Related Fields}
\bvolume{156}
\bpages{593--612}.
\bid{doi={10.1007/s00440-012-0437-0}, issn={0178-8051}, mr={3078281}}
\end{barticle}
%
\bptok{imsref}%
\endbibitem

\bibitem{LPP}
%
\begin{barticle}[mr]
\bauthor{\bparticle{Le}~\bsnm{Page},~\bfnm{{\'E}mile}\binits{{\'
E}.}} \AND
\bauthor{\bsnm{Peign{\'e}},~\bfnm{Marc}\binits{M.}}
(\byear{1997}).
\btitle{A local limit theorem on the semi-direct product of {$\mathbf
{R}\sp{*+}$} and {$\mathbf{R}\sp d$}}.
\bjournal{Ann. Inst. Henri Poincar\'e Probab. Stat.}
\bvolume{33}
\bpages{223--252}.
\bid{doi={10.1016/S0246-0203(97)80122-7}, issn={0246-0203}, mr={1443957}}
\end{barticle}
%
\bptok{imsref}%
\endbibitem

\bibitem{L}
%
\begin{barticle}[mr]
\bauthor{\bsnm{Lin},~\bfnm{Michael}\binits{M.}}
(\byear{1970}).
\btitle{Conservative {M}arkov processes on a topological space}.
\bjournal{Israel J. Math.}
\bvolume{8}
\bpages{165--186}.
\bid{issn={0021-2172}, mr={0265559}}
\end{barticle}
%
\bptok{imsref}%
\endbibitem

\bibitem{M}
%
\begin{barticle}[mr]
\bauthor{\bsnm{Mirek},~\bfnm{Mariusz}\binits{M.}}
(\byear{2011}).
\btitle{Heavy tail phenomenon and convergence to stable laws for
iterated {L}ipschitz maps}.
\bjournal{Probab. Theory Related Fields}
\bvolume{151}
\bpages{705--734}.
\bid{doi={10.1007/s00440-010-0312-9}, issn={0178-8051}, mr={2851697}}
\end{barticle}
%
\bptok{imsref}%
\endbibitem

\bibitem{PW1}
%
\begin{barticle}[mr]
\bauthor{\bsnm{Peign{\'e}},~\bfnm{Marc}\binits{M.}} \AND
\bauthor{\bsnm{Woess},~\bfnm{Wolfgang}\binits{W.}}
(\byear{2011}).
\btitle{Stochastic dynamical systems with weak contractivity
properties {I}. {S}trong and local contractivity}.
\bjournal{Colloq. Math.}
\bvolume{125}
\bpages{31--54}.
\bid{doi={10.4064/cm125-1-4}, issn={0010-1354}, mr={2860581}}
\end{barticle}
%
\bptok{imsref}%
\endbibitem

\bibitem{PW2}
%
\begin{barticle}[mr]
\bauthor{\bsnm{Peign{\'e}},~\bfnm{Marc}\binits{M.}} \AND
\bauthor{\bsnm{Woess},~\bfnm{Wolfgang}\binits{W.}}
(\byear{2011}).
\btitle{Stochastic dynamical systems with weak contractivity
properties {II}. {I}teration of {L}ipschitz mappings}.
\bjournal{Colloq. Math.}
\bvolume{125}
\bpages{55--81}.
\bid{doi={10.4064/cm125-1-5}, issn={0010-1354}, mr={2860582}}
\end{barticle}
%
\bptok{imsref}%
\endbibitem

\end{thebibliography}
\end{document}